\def\cl@chapter{}
\DeclareMathAlphabet{\altpazocal}{OMS}{cmsy}{m}{n}
\journalname{Numerical Algorithms}
\spnewtheorem{fact}{Fact}{\bfseries}{\it}
\newcommand*\linenomathpatchAMS[1]{%
  \expandafter\pretocmd\csname #1\endcsname {\linenomathAMS}{}{}%
  \expandafter\pretocmd\csname #1*\endcsname{\linenomathAMS}{}{}%
  \expandafter\apptocmd\csname end#1\endcsname {\endlinenomath}{}{}%
  \expandafter\apptocmd\csname end#1*\endcsname{\endlinenomath}{}{}%
}
  \let\linenomathAMS\linenomathWithnumbers
  \patchcmd\linenomathAMS{\advance\postdisplaypenalty\linenopenalty}{}{}{}
  \let\linenomathAMS\linenomathNonumbers
\newcommand{\fenv}[1]%
{\ensuremath{\,\overrightarrow{\operatorname{env}}_{#1}}}
\newcommand{\benv}[1]%
{\ensuremath{\,\overleftarrow{\operatorname{env}}_{#1}}}
\newcommand{\RR}{\ensuremath{\mathds R}}
\newcommand{\NN}{\ensuremath{\mathds N}}
\newcommand{\aff}{\ensuremath{\operatorname{aff}}}
\newcommand{\Fix}{\ensuremath{\operatorname{Fix}}}
\newcommand{\Id}{\ensuremath{\operatorname{Id}}}
\DeclareMathOperator{\circum}{circumcenter}
\DeclareMathOperator{\dom}{dom}
\DeclareMathOperator{\dist}{dist}
\newcommand{\MAP}{{\rm MAP}}
\newcommand{\DRM}{{\rm DRM}}
\newcommand{\CRM}{{\rm CRM}}
\DeclareMathAlphabet{\pazocal}{OMS}{zplm}{m}{n}
\newcommand{\SOC}{\mathcal{C}}
\newcommand{\CC}{\ensuremath{C}}
\newcounter{count}
\begin{document}

\title{On the Circumcentered-Reflection Method for the Convex Feasibility Problem\thanks{RB was partially supported by the \emph{Brazilian Agency  Conselho Nacional de Desenvolvimento Cient\'ifico e Tecno\'ogico} (CNPq), Grants 304392/2018-9 and 429915/2018-7; \\ YBC was partially supported by the \emph{National Science Foundation} (NSF), Grant DMS -- 1816449.}}
\author{Roger Behling\and 
Yunier Bello-Cruz \and Luiz-Rafael Santos}

 % \date{\today}

\institute{
  Roger Behling \Letter   \at School of Applied Mathematics, Funda\c{c}\~ao Get\'ulio Vargas \\ 
Rio de Janeiro, RJ -- 22250-900, Brazil. \email{rogerbehling@gmail.com}
    \and 
Yunier Bello-Cruz \at Department of Mathematical Sciences, Northern Illinois University. \\  DeKalb, IL -- 60115-2828, USA. \email{yunierbello@niu.edu}
\and 
 Luiz-Rafael Santos \at Department of Mathematics, Federal University of Santa Catarina. \\ 
Blumenau, SC -- 88040-900, Brazil. \email{l.r.santos@ufsc.br}
}

\date{\today}
\maketitle

\begin{abstract} 

The ancient concept of circumcenter has recently given birth to the Circumcentered-Reflection method (CRM). CRM was first employed to solve best approximation problems involving affine subspaces. In this setting, it was shown to outperform the most prestigious projection based schemes, namely, the Douglas-Rachford method (DRM) and the method of alternating projections (MAP). We now prove convergence of CRM for finding a point in the intersection of a finite number of closed convex sets. This striking result is derived under a suitable product space reformulation in which a point in the intersection of a closed convex set with an affine subspace is sought. It turns out that CRM skillfully tackles the reformulated problem. We also show that for a point in the affine set the CRM iteration is always closer to the solution set than both the MAP and DRM iterations. Our theoretical results and numerical experiments, showing outstanding performance, establish CRM as a powerful tool for solving general convex feasibility problems.

\keywords{Circumcenter \and Reflection method  \and Convex feasibility problem \and  Accelerating convergence \and Douglas-Rachford method \and Method of alternating projections.}

\subclass{49M27 \and 65K05 \and 65B99 \and 90C25}

\end{abstract}

\section{Introduction}\label{sec:intro}

We consider the important problem of finding a point in a nonempty set $X\subset \RR^n$ given by the intersection of finitely many closed convex sets $\{X_i\}_{i=1}^m$, that is, 
\[\label{OriginalProblem}
\text{ Find } x^{*}\in X\coloneqq  \bigcap_{i=1}^m X_i. 
\]
We assume that for each $i=1,2,\ldots,m$ and any $z\in\RR^n$, the orthogonal projection of $z$ onto $X_i$, denoted by $P_{X_i}(z)$, is available.

It is well known that Problem \eqref{OriginalProblem} arises in many applications in science and engineering and is often solved enforcing the Douglas-Rachford method (DRM) or the method of alternating projections (MAP). The extensive literature on DRM and MAP in various contexts of Continuous Optimization expresses their relevance in the field (see, for instance, \cite{Bauschke:2006ej,BCNPW15,Phan:2016hl,Svaiter:2011fb,Bauschke:2016bt,Douglas:1956kk,Bauschke:2016jw,Bauschke:2016,BCNPW14}). Notwithstanding, DRM and MAP sequences may converge slowly due to spiraling and zigzag behavior, respectively, even in the case where the $X_i$'s are affine subspaces. This was our main motivation in \cite{Behling:2017da} for the introduction   of the Circumcentered-Reflection method (CRM).

The iterates of CRM are based on a generalization of the Euclidean concept of circumcenter (the point in the plane equidistant to the vertices of a given triangle). Successfully applied for projecting a given point onto the intersection of finitely many  affine subspaces (see \cite{Behling:2017da,Behling:2017bz,Behling:2019dj}), CRM is in its original form likely to face issues when dealing with general convex feasibility. Indeed, in the very first paper introducing CRM \cite{Behling:2017da}, it was pointed out that under non-affine structures, the method could possibly diverge or simply be undefined. There is now an actual example featuring two intersecting balls for which CRM stalls or diverges depending on the initial point (see \cite[Figure 10]{AragonArtacho:2019ug}). These apparent drawbacks are genuinely overcome in the present work. The key is to reformulate the problem of finding a point in $X$. The product space reformulation introduced by Pierra~\cite{Pierra:1984hl} will be considered. 

Product space versions of Problem \eqref{OriginalProblem} have been considered for both DRM and MAP~\cite{Bauschke:2006ej,AragonArtacho:2019ug}. Although necessary for DRM to converge if $m\geq 3$, the product space approach does not lead to satisfactory performance in comparison to variants of DRM  (see, for instance, the  Cyclic  Douglas--Rachford Method (CyDRM)~\cite{Borwein:2014,Borwein:2015vm} and the Cyclically Anchored  Douglas--Rachford Algorithm (CADRA)~\cite{Bauschke:2015df}). On the other hand, MAP converges with or without the product space reformulation, but tends to get worse on complexity when applied to the reformulated problem. Nonetheless, usually avoided for DRM and MAP, we will see that when suitably utilized, the product space reformulation captures CRM's essence and enables it to efficiently solve Problem \eqref{OriginalProblem}.

The main result in our work guaranties that, for any starting point, CRM converges to a solution of Problem \eqref{OriginalProblem}. This is one of the most impressive abilities of CRM proven so far, although noticeable results have already been derived by various authors. The history of circumcenters dates back to as early as $300$ BC, when they were described in Euclid's Elements \cite[Book 4, Proposition 5]{Euclid:1956vn}. More than two thousand years later, in 2018, circumcenters were discovered to be a simple yet effective way of accelerating the prominent Douglas-Rachford method~\cite{Behling:2017da}. In 2019, the paper celebrating 60 years of DRM \cite{Lindstrom:2018uc} mentions circumcenters as a natural way of dealing with DRM's spiraling characteristic. Also, CRM was employed for multi-affine set problems in \cite{Behling:2017bz} and in a block-wise version in \cite{Behling:2019dj}. Along with these articles, groups of researchers have been leading careful studies on properties, properness and calculations of circumcenters including a viewpoint of isometries (see \cite{Bauschke:2018ut,Bauschke:2019uh,Ouyang:2018gu,Bauschke:2018wa,Bauschke:2019}). Very recently, a work on CRM for particular nonconvex wavelet problems was carried out~\cite{Dizon:2019vq}. It seems that circumcenters are here to stay.

Before providing all the technical machinery of our approach, let us consider \Cref{fig:intro-comparison}  as a synthesized preview of what is developed in the present work. \Cref{fig:intro-comparison} illustrates the problem of finding a point in the intersection of two given balls and displays DRM, MAP and CRM trajectories starting all from the common initial point $x^0$ and the number of iterations taken by them to track a solution up to a given accuracy. A trajectory of CRM under the product space reformulation is exhibited as well and labeled as CRM-prod. 
After the picture, the definitions of  MAP, DRM, CRM and CRM-prod sequences are described. The particular intersection problem considered in  \Cref{fig:intro-comparison} is of special interest as for a different choice of starting point $x^{0}$,  CRM might not be well-defined or worse, it may diverge. In this paper, these two issues are overcome by CRM-prod.
\begin{figure}[H]\centering
\centering
\includegraphics[width=.60\textwidth]{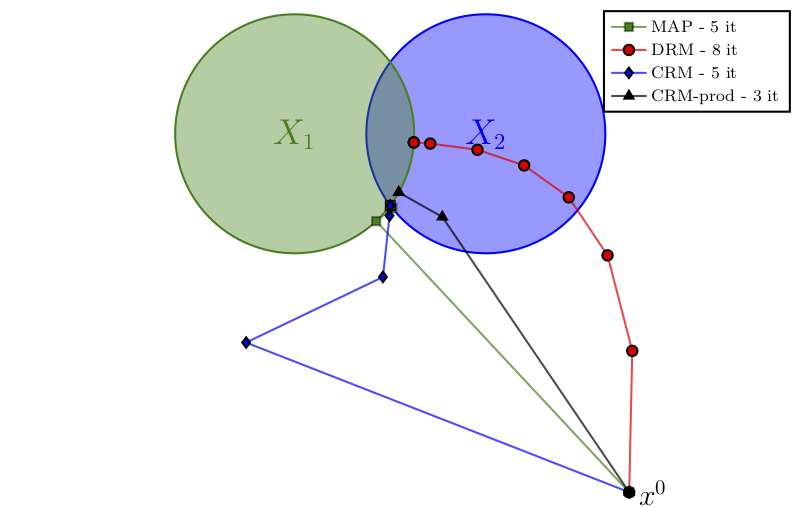}
   \caption{Geometric illustration of MAP, DRM, CRM and CRM-prod.}
  \label{fig:intro-comparison}
\end{figure}

Let us explain how the MAP, DRM and CRM sequences are generated for the two-set case, that is, the case in which a  point common to two closed convex sets $X_{1},X_{2}\subset\RR^n$ is sought. For a current iterate $z\in\RR^n$, we move to $z_{\MAP}$, $z_{\DRM}$, $z_{\CRM}$ using MAP, DRM and CRM, respectively, where
\[\label{MAP_DRM_CRM}
z_{\MAP}\coloneqq  P_{X_{2}}P_{X_{1}}(z), \qquad z_{\DRM}\coloneqq  \frac{1}{2}(\Id+R_{X_{2}}R_{X_{1}})(z), \qquad z_{\CRM}\coloneqq  \circum\{z,R_{X_{1}}(z),R_{X_{2}}R_{X_{1}}(z)\}.
\]
Let $\Id$ denote the identity operator in $\RR^n$ and consider the notation for reflection operators $R_{\Omega}\coloneqq  2P_{\Omega}-\Id$ for a given nonempty, closed and convex set $\Omega\subset\RR^n$. Note that MAP employs a composition of projections and, not by chance, DRM is also known as averaged reflection method, as it iterates by taking the mean of the current iterate with a composition of reflections. As for CRM, it chooses the Euclidean circumcenter of the triangle immersed in $\RR^n$ with vertices $z$, $R_{X_{1}}(z)$ and $R_{X_{2}}R_{X_{1}}(z)$. The circumcenter $z_{\CRM}$ is defined by satisfying two criteria: (i) $z_{\CRM}$ is equidistant to the three vertices, i.e., $\|z-z_{\CRM}\|=\|R_{X_{1}}(z)-z_{\CRM}\|=\|R_{X_{2}}R_{X_{1}}(z)-z_{\CRM}\|$, with $\|\cdot\|$ representing the Euclidean norm; (ii)  $z_{\CRM}$ belongs to the affine subspace determined by $z$, $R_{X_{1}}(z)$ and $R_{X_{2}}R_{X_{1}}(z)$, denoted here by $\aff\{z,R_{X_{1}}(z),R_{X_{2}}R_{X_{1}}(z)\}$.

In order to understand how CRM-prod works, we have to consider the product space environment by Pierra~\cite{Pierra:1984hl}. Assume that our problem still consists of finding a point in $X_{1}\cap X_{2}$ and take into account now two new closed convex sets $W,D\subset\RR^{2n}$, where $W\coloneqq  X_{1}\times X_{2}$ and $D\coloneqq \{(x,x)\in\RR^{2n} \mid x\in\RR^{n}\}$. Indeed, $W$ and $D$ are closed, convex and actually it is straightforward to see that $D$ is a subspace of $\RR^{2n}$. Then, it obviously holds that
\[\label{ProductSpaceEnvironment}
x^{*}\in X_{1}\cap X_{2} \Leftrightarrow (x^{*},x^{*})\in W\cap D.
\]
CRM-prod is ruled by $z^{k+1}\coloneqq  \circum\{z^k,R_W(z^k),R_DR_W(z^k)\}$. We anticipate that in our approach it will be key to initialize CRM-prod in $D$. Indeed, it will be shown that if $z^0\in D$, then $z^k\in D$ for all $k$. This enables us to fully understand the CRM-prod trajectory plotted in Figure 1. We took the initial point $z^0=(x^0,x^0)\in\RR^4$ so that the CRM-prod sequence lies entirely in $D$, that is, each $z^k\in\RR^4$ has the form $z^k=(x^k,x^k)$. So, we plotted the $x^k$ part of the CRM-prod sequence in \Cref{fig:intro-comparison}. 

With all these basic notions having been introduced, we get a clear geometric comprehension of the sequences illustrated in \Cref{fig:intro-comparison}. Now, it is important to stress that the intersection problem \eqref{OriginalProblem} when $m\geq 3$ can also be reformulated as the problem of finding a point in the intersection of two closed convex sets, one of which being a subspace (see  \Cref{sec:prod}). With that said, our investigation will focus on CRM for finding a point in the intersection of a closed convex set $K$ and an affine subspace $U$, a problem that is actually interesting and relevant on its own.

The ability of CRM for finding a point in the intersection of a closed convex set $K$ and an affine subspace $U$ is simply impressive in comparison to the classical MAP and DRM. Moreover, a CRM iteration is always better by means of distance to the solution set than both DRM and MAP iterations taken at any point in $U$ (see the last theorem in Section \ref{sec:convex-affine}).  Also, it has to be noted that the computational effort for calculating a CRM step is essentially the same as the one for computing a MAP or DRM step. This is due to the fact that a circumcenter outcoming from a two-set intersection problem consists of solving a $2\times 2$ linear system of equations. The outstanding numerical performance of CRM over MAP and DRM recorded in our experiments not only reveals a Newtonian flavor of CRM, it also opens opportunities for new research.  Moreover, an explicit connection with Newton-Raphson method was shown in \cite{Dizon:2019vq} for nonconvex problems, and used to prove quadratic convergence of a hybrid circumcenter scheme.

Our paper is organized as follows. Section \ref{sec:convex-affine} is at the core of this work containing our two most technical results, namely \Cref{TeoremaSpan} and \Cref{ComparisonCRM_MAP_DRM}. \Cref{TeoremaSpan} states convergence of CRM when applied to finding a point common to an affine subspace and a general closed convex set. \Cref{ComparisonCRM_MAP_DRM} claims that CRM is better than both MAP and DRM when computed at iterates belonging to the affine subspace. Also, \Cref{TeoremaSpan}, together with the product space reformulation, leads to the main contribution of the manuscript in Section \ref{sec:prod}, namely, the global convergence of CRM for solving Problem \eqref{OriginalProblem}. Numerical experiments are conducted in Section \ref{sec:NI} concerning convex inclusions such as polyhedral and second-order cone feasibility. The results show that in all instances, CRM outperforms MAP and DRM. We close the article in Section \ref{sec:concluding} with a summary of our results together with perspectives for future work.

\section{CRM for the intersection of a convex set and an affine subspace}\label{sec:convex-affine}

In this section, the problem under consideration is given by
\[\label{eq:KcapUProblem}
\text{ Find }z^{*}\in K\cap U,
\]
with $K\subset \RR^{n}$ a closed convex set, $U\subset \RR^{n}$ an affine subspace and $K\cap U$ nonempty.

We are going to prove that CRM solves problem \eqref{eq:KcapUProblem} as long as the initial point lies in $U$ and the circumcenter iteration considers first a reflection onto $K$ and then onto $U$. This CRM scheme reads as 
\[\label{eq:CRMKcapUiter}
z^{k+1} \coloneqq  \CC(z^{k})= \circum\{z^{k},R_{K}(z^{k}),R_{U}R_{K}(z^{k})\}, \text{ with } z^{k}\in U.
\]
It turns out that if $z^0\in U$, then the whole sequence $(z^k)_{k\in\NN}$ generated upon \eqref{eq:CRMKcapUiter} remains in $U$ and converges to a solution, that is, a point in $K\cap U$. This result lies at the core of our work. In order to establish it, we need to go through some preliminaries. We start with a formal and general definition of circumcenter.

\begin{definition}[circumcenter operator]\label{CircDef}
Let $\pazocal{B}\coloneqq (Y_1,Y_2,\ldots,Y_q)$ be a collection of ordered nonempty closed convex sets in $\RR^{n}$, where $q\ge 1$ is a fixed integer. 
The circumcenter  of $\pazocal{B}$ at the point $z\in \RR^n$  is denoted by $C_\pazocal{B}(z)$ and defined by the following properties:
	\item [ \bf (i)] $\|z-C_\pazocal{B}(z)\|=\|R_{Y_1}(z)-C_\pazocal{B}(z)\|=\cdots=\|R_{Y_q}\cdots R_{Y_2}R_{Y_1}(z)-C_\pazocal{B}(z)\|$; and 
	\item [ \bf (ii)] $C_\pazocal{B}(z)\in \aff\{z,R_{Y_1}(z),R_{Y_2}R_{Y_1}(z),\ldots ,R_{Y_q}\cdots R_{Y_2}R_{Y_1}(z)\}$.
\end{definition}
For convenience, we sometimes also write $C_{\pazocal{B}}(z)$ as \[\circum\{z,R_{Y_1}(z),R_{Y_2}R_{Y_1}(z),\ldots ,R_{Y_q}\cdots R_{Y_2}R_{Y_1}(z)\}.\]
Circumcenters, when well-defined, arise from the intersection of suitable bisectors and their computation requires the resolution of a $q\times q$ linear system of equations~\cite{Bauschke:2018ut}, with $q$ as in Definition \ref{CircDef}. 
The good definition of circumcenters is not an issue when the convex sets in question are intersecting affine subspaces (see \cite{Behling:2017bz}). This  might not be the case for general convex sets (see \cite{Behling:2017da}).
However, we are going to see  that for any point $z\in U$ the circumcenter used in \eqref{eq:CRMKcapUiter}, namely  $\CC(z) =\CC_{\pazocal{B}}(z)$, where $\pazocal{B} = (K, U)$,  is well-defined.

We proceed now by listing and establishing results that are going to enable us  to indeed prove that $C(z)$ is well-defined for all $z\in U$.

\begin{lemma}[good definition and characterization of CRM for intersecting affine subspaces] \label{charact_circ}
Consider a collection of affine subspaces $\pazocal{B}=(U_1,U_2,\ldots,U_q)$ with nonempty intersection $U_\pazocal{B}\coloneqq  \bigcap_{i=1}^q U_i$. For any $z\in\RR^n$, $C_\pazocal{B}(z)$ exists, is unique and fulfills
	\item [ \bf (i)] $P_{U_\pazocal{B}}(C_\pazocal{B}(z))=P_{U_\pazocal{B}}(z)$; and 
	\item[ \bf (ii)]  for any $s\in U_\pazocal{B}$, we have $C_\pazocal{B}(z)=P_{U_z}(s)$, where $U_z\coloneqq  \aff\{z,R_{U_1}(z),R_{U_2}R_{U_1}(z),\ldots ,R_{U_q}\cdots R_{U_2}R_{U_1}(z)\}$.
\end{lemma}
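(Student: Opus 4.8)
The plan is to work throughout with the vertices $x_0\coloneqq z$ and $x_j\coloneqq R_{U_j}\cdots R_{U_1}(z)$ for $j=1,\dots,q$, so that $U_z=\aff\{x_0,x_1,\dots,x_q\}$, and to denote by $V_i$ the linear subspace parallel to $U_i$, by $M\coloneqq\bigcap_{i=1}^q V_i$ the direction of $U_\pazocal{B}$, and by $L$ the direction of $U_z$, i.e.\ $L=\lspan\{x_1-x_0,\dots,x_q-x_0\}$. The starting observation is that each $R_{U_i}=2P_{U_i}-\Id$ is an affine isometry that fixes $U_i$ pointwise; hence every composition $R_{U_j}\cdots R_{U_1}$ is an affine isometry fixing $U_\pazocal{B}$ pointwise. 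Consequently, for any $s\in U_\pazocal{B}$ one has $R_{U_j}\cdots R_{U_1}(s)=s$, and since an isometry preserves distances, $\|x_j-s\|=\|R_{U_j}\cdots R_{U_1}(z)-R_{U_j}\cdots R_{U_1}(s)\|=\|z-s\|$ for every $j$. Thus \emph{every} point of $U_\pazocal{B}$ is equidistant from all the vertices $x_0,\dots,x_q$; this is the engine of the whole argument.

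Next I would establish existence together with the formula in (ii). Fix any $s\in U_\pazocal{B}$ and set $c\coloneqq P_{U_z}(s)$. By construction $c\in U_z$, so property (ii) of \Cref{CircDef} holds. Since each $x_j\in U_z$ and $s-c\perp L$ while $x_j-c\in L$, Pythagoras gives $\|x_j-s\|^2=\|x_j-c\|^2+\|c-s\|^2$, whence $\|x_j-c\|^2=\|z-s\|^2-\|c-s\|^2$, a quantity independent of $j$. Therefore $c$ is equidistant from all vertices and property (i) of \Cref{CircDef} holds as well, so $c=P_{U_z}(s)$ is a legitimate circumcenter and existence is proved.

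For uniqueness I would exploit that the equidistance condition is affine-linear in the candidate point: writing $\|x_j-c\|^2=\|x_0-c\|^2$ and expanding yields $\langle x_j-x_0,\,c\rangle=\tfrac12(\|x_j\|^2-\|x_0\|^2)$ for each $j$. If $c_1,c_2$ both satisfy (i) and (ii), subtracting gives $\langle x_j-x_0,\,c_1-c_2\rangle=0$ for all $j$, so $c_1-c_2\perp L$; but $c_1,c_2\in U_z$ forces $c_1-c_2\in L$, hence $c_1-c_2\in L\cap L^\perp=\{0\}$. This proves uniqueness, and combined with the construction above it shows that $P_{U_z}(s)$ is independent of the choice of $s\in U_\pazocal{B}$ and always equals $C_\pazocal{B}(z)$, giving (ii) of the lemma.

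Finally, for (i) of the lemma, set $p\coloneqq P_{U_\pazocal{B}}(z)$ and $c\coloneqq C_\pazocal{B}(z)$. Since $p\in U_\pazocal{B}$, the characterization of projection onto an affine subspace reduces the claim $P_{U_\pazocal{B}}(c)=p$ to showing $c-p\perp M$. As $z-p\perp M$ by definition of $p$, and $c-p=(c-z)+(z-p)$, it suffices to prove $c-z\perp M$; and because $c,z\in U_z$ give $c-z\in L$, it is enough to show $L\perp M$. The key step here is that each consecutive increment $x_j-x_{j-1}=R_{U_j}(x_{j-1})-x_{j-1}=2\big(P_{U_j}(x_{j-1})-x_{j-1}\big)\in V_j^\perp$, and since $M\subseteq V_j$ we have $V_j^\perp\subseteq M^\perp$; telescoping $x_j-x_0=\sum_{i=1}^j(x_i-x_{i-1})$ then places every generator of $L$ in $M^\perp$, so $L\subseteq M^\perp$, i.e.\ $L\perp M$. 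This yields $c-p\perp M$ and hence (i). I expect this last orthogonality argument — correctly identifying the direction spaces and establishing $L\perp M$ via the telescoped reflection increments — to be the main obstacle, the rest being isometry bookkeeping and a routine Pythagoras/linear-algebra computation.
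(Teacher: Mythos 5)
Your proof is correct and complete, and it can stand as a self-contained replacement for the paper's treatment, since the paper itself does not prove this lemma but simply defers to Lemmas 3.1 and 3.2 of \cite{Behling:2017bz}. Your argument is essentially the one used in that reference: reflections are affine isometries fixing $U_\pazocal{B}$ pointwise, so every $s\in U_\pazocal{B}$ is equidistant from the vertices, Pythagoras identifies $P_{U_z}(s)$ as a circumcenter, the affine-linearity of the equidistance conditions gives uniqueness, and the orthogonality of the increments $x_j-x_{j-1}$ to $V_j\supseteq M$ (telescoped) yields item (i).
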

\begin{proof}
See \cite[Lemmas 3.1 and 3.2]{Behling:2017bz}.
\qed\end{proof}

It is  worth emphasizing that, whenever we deal with circumcenter operators regarding two sets, many results can be   carried out as if we were in  $\RR^{2}$ since, in this case, the circumcenter is a point sought in the affine subspace defined by three points. So, this affine subspace has dimension of at most $2$. Along the manuscript, one is going to come across arguments  based on this fact.

% It is worth emphasizing that the proof of \Cref{lemma:CRMhyperplane} was  carried out as if we were in  $\RR^{2}$ (or even in $\RR$), since $ \circum\{z,R_{H}(z),R_{U}R_{H}(z)\}$  is, by definition, a point sought in the affine subspace defined by the three points $z,R_{H}(z)$ and $R_{U}R_{H}(z)$. This affine subspace has, of course, dimension of at most 2. 

\begin{lemma}[one step convergence of CRM for a hyperplane and an affine subspace]\label{lemma:CRMhyperplane}
Let $H,U\subset \RR^{n}$ be a hyperplane and an affine subspace, respectively. If $H\cap U$ is nonempty, then  
\[P_{H\cap U}(z) = \circum\{z,R_{H}(z),R_{U}R_{H}(z)\},\] for any $z\in U$.
\end{lemma}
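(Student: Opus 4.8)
The plan is to identify the circumcenter with the projection $p \coloneqq P_{H\cap U}(z)$. Since $H$ and $U$ are affine subspaces with $H\cap U\neq\emptyset$, \Cref{charact_circ} applied to the collection $\pazocal{B}=(H,U)$ guarantees that $C_{\pazocal{B}}(z)=\circum\{z,R_H(z),R_UR_H(z)\}$ exists and is unique. Hence it suffices to verify that $p$ fulfils the two defining conditions of \Cref{CircDef}, and uniqueness will then force $p=C_{\pazocal{B}}(z)$, which is exactly the asserted identity.

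Before that, I would clear away the degenerate cases. Fix a unit normal $a$ with $H=\{x\in\RR^n\mid\scal{a}{x}=\beta\}$ and set $\delta\coloneqq\scal{a}{z}-\beta$ and $\nu\coloneqq P_{\pa U}\,a$, the orthogonal projection of $a$ onto the linear subspace $\pa U$ parallel to $U$. If $z\in H$, i.e. $\delta=0$, then $z\in H\cap U$ and the three points $z,R_H(z),R_UR_H(z)$ all collapse to $z$, so $p=z$ works trivially. I would therefore assume $\delta\neq 0$; combined with $H\cap U\neq\emptyset$, this also forces $\nu\neq0$, and in particular $H\cap U$ is a genuine hyperplane inside $U$.

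Condition (i), equidistance, is the easy half: because $p\in H\cap U$ it is fixed by both reflections, which are isometries, so $\|R_H(z)-p\|=\|R_H(z)-R_H(p)\|=\|z-p\|$ and likewise $\|R_UR_H(z)-p\|=\|R_UR_H(z)-R_U(p)\|=\|R_H(z)-p\|$; thus all three distances agree.

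Condition (ii), membership in $\aff\{z,R_H(z),R_UR_H(z)\}$, is where the real work lies, and I expect it to be the main obstacle. The idea is to compute the two-dimensional direction space of that affine hull and check that $p-z$ lies in it. From $R_H(z)=z-2\delta a$ one gets $R_H(z)-z=-2\delta a$ and $R_UR_H(z)-R_H(z)=2\bigl(P_UR_H(z)-R_H(z)\bigr)=4\delta(a-\nu)$, so the direction space equals $\lspan\{a,\,a-\nu\}=\lspan\{a,\nu\}$. On the other hand, since $z\in U$ the projection $p$ may be computed inside $U$, where $H\cap U=\{z+u\mid u\in\pa U,\ \scal{\nu}{u}=-\delta\}$ has in-$U$ normal $\nu$; hence $p-z=-(\delta/\|\nu\|^2)\,\nu\in\lspan\{a,\nu\}$. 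This gives (ii). The decisive point — and the heart of the argument — is that the orthogonal residual $P_UR_H(z)-R_H(z)$ is parallel to $a-\nu$, which is precisely what drops the in-$U$ normal direction $\nu$ into the affine hull of the three iterates. Having verified (i) and (ii) for $p$, the uniqueness supplied by \Cref{charact_circ} yields $P_{H\cap U}(z)=p=\circum\{z,R_H(z),R_UR_H(z)\}$.
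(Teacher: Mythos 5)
Your proposal is correct, and it reaches the lemma by a genuinely different route than the paper. The paper works \emph{forward} from the circumcenter: it shows by synthetic geometry that $\tilde z\coloneqq\circum\{z,R_{H}(z),R_{U}R_{H}(z)\}$ lies in $U$ (it sits on the line through $z$ and $P_{U}R_{H}(z)$, a perpendicular bisector of the segment $\overline{R_{H}(z)\,R_{U}R_{H}(z)}$ contained in $U$) and lies in $H$ (a right angle at $P_{H}(z)$ plus $H$ being a hyperplane), and only then invokes item (i) of \Cref{charact_circ}, namely $P_{H\cap U}(\tilde z)=P_{H\cap U}(z)$, to identify $\tilde z$ with the projection. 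You work \emph{backward} from the projection: you check that $p=P_{H\cap U}(z)$ satisfies the two defining properties of \Cref{CircDef} --- equidistance via the neat observation that $R_{H}$ and $R_{U}$ are isometries fixing $p$, and affine-hull membership via the explicit computation that the hull's direction space is $\lspan\{a,\nu\}$ while $p-z$ is a multiple of the in-$U$ normal $\nu$ --- and then conclude from the uniqueness assertion of \Cref{charact_circ}; both uses of that lemma are legitimate since a hyperplane is an affine subspace. Your route is more computational but arguably more robust: the degenerate situation the paper must exclude separately ($P_{U}R_{H}(z)=z$, i.e.\ $\nu=0$) is subsumed in your one-line implication that $\delta\neq 0$ together with $H\cap U\neq\varnothing$ forces $\nu\neq 0$, and the algebra is insensitive to collinear configurations --- the only slip is calling the direction space ``two-dimensional,'' since $\lspan\{a,\nu\}$ degenerates to a line when $a$ lies in the subspace parallel to $U$, but your membership argument $p-z\in\lspan\{\nu\}\subset\lspan\{a,\nu\}$ is unaffected. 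What the paper's picture buys in exchange is the stronger intermediate fact that the circumcenter itself lands in $H\cap U$, which is exactly the geometric intuition reused in \Cref{lemma:goodCRMKcapU} and in \Cref{lemma-quasinonexp}; your proof also isolates the structural reason for one-step convergence, namely that the in-$U$ normal of $H\cap U$ automatically enters the circumcenter's search space.
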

\begin{proof} Let $z\in U$. If $z$ lies also in $H$, the result follows trivially. Therefore, assume $z\notin H$. For our analysis it will be convenient to look at the point $P_UR_{H}(z)$. If $P_UR_{H}(z)$ coincides with $z$, then $\aff\{z,R_{H}(z),R_{U}R_{H}(z)\}=\aff\{R_{H}(z),R_{U}R_{H}(z)\}$, because $z=P_UR_{H}(z)=\frac{1}{2}(R_{H}(z)+R_{U}R_{H}(z))$. Also, the only point equidistant to $R_{H}(z)$ and $R_{U}R_{H}(z)$ in $\aff\{R_{H}(z),R_{U}R_{H}(z)\}$ is $P_UR_{H}(z)=z$. This means that $z=\circum\{z,R_{H}(z),R_{U}R_{H}(z)\}$ and, in particular, $z=R_{H}(z)$, which implies that $z\in H$, a contradiction. So, we have that $z$ and $P_UR_{H}(z)$ are distinct and the line connecting these points, denoted here by $L_z$, is well-defined. Moreover, $L_z$ lies entirely in $U$ and perpendicularly crosses the segment $\overline{R_{H}(z)\,R_{U}R_{H}(z)}\coloneqq  \{w\in\RR^n: w=tR_{H}(z)+(1-t)R_{U}R_{H}(z),\, t\in [0,1]\}$ at its midpoint, namely $P_UR_{H}(z)$. So, $L_{z}$ is a bisector of the segment $\overline{R_{H}(z)\,R_{U}R_{H}(z)}$. Thus, $\tilde z\coloneqq \circum\{z,R_{H}(z),R_{U}R_{H}(z)\}$ has to lie in $L_z$ and consequently in $U$. 

Furthermore,  ${\displaystyle \overline{z\,P_H(z)}\perp \overline{\tilde z\,P_H(z)}}$ because the circumcenter is the point where the bisectors of the triangle of vertices $z,\,R_{H}(z)$ and $R_{U}R_{H}(z)$ intersect. Since $H$ is a hyperplane, the orthogonality between $\displaystyle \overline{z\,P_H(z)}$ and $\overline{\tilde z\,P_H(z)}$ suffices to guaranty that $\tilde z\in H$. Thus, $\tilde z\in H\cap U$. Finally, from Lemma \ref{charact_circ}, we have that $P_{H\cap U}(z)=P_{H\cap U}(\tilde z)$. Hence,  $P_{H\cap U}(z)=\tilde z$, which completes the  proof.  
\qed
\end{proof}

The next results concern the domain of the circumcenter operator for $K$  and $U$, namely $\dom(\CC)\coloneqq  \{z\in\RR^{n} \mid \CC(z)=\circum\{z,R_{K}(z),R_{U}R_{K}(z)\}\text{ is well-defined} \}$. We will see that $U\subset\dom(\CC)$ and $C(z)\in U$, whenever $z\in U$.

\begin{lemma}[well-definedness and characterization of CRM for $K$ and $U$]
\label{lemma:goodCRMKcapU} Let  $K,U\subset \RR^{n}$, where  $K$ is  a closed convex set and $U$ is  an affine subspace and   assume  $K\cap U$ to be nonempty. Then, for all $z\in U$, $\CC(z)\coloneqq  \circum\{z,R_{K}(z),R_{U}R_{K}(z)\}$ is well-defined and we have $C(z) \in U$. Furthermore, $\CC(z) = P_{H_z\cap U}(z)$, where $H_z\coloneqq    \{x\in \RR^{n}\mid (x-P_K(z))^{T}(z-P_K(z))= 0\}$ if $z\notin K$ and $H_z\coloneqq  K$, otherwise. 
\end{lemma}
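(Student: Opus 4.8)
The plan is to reduce the statement to the already-solved hyperplane case of \Cref{lemma:CRMhyperplane}. First I would dispose of the trivial case $z\in K$: here $z\in K\cap U$, so $R_K(z)=z$ and $R_UR_K(z)=R_U(z)=z$, the three points collapse to $z$, and the circumcenter is $z$ itself. Since $H_z=K$ and $z\in K\cap U$, we get $P_{H_z\cap U}(z)=P_{K\cap U}(z)=z=\CC(z)$, as claimed.

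Now assume $z\notin K$, so that $z\neq P_K(z)$ and $H_z$ is a genuine hyperplane, namely the supporting hyperplane of $K$ at $P_K(z)$ with normal $z-P_K(z)$. The crucial observation I would establish is that $P_{H_z}(z)=P_K(z)$: projecting $z$ onto $H_z$ amounts to subtracting the component of $z-P_K(z)$ along the normal, and since that normal is exactly $z-P_K(z)$ one obtains $P_{H_z}(z)=z-(z-P_K(z))=P_K(z)$. Consequently $R_{H_z}(z)=2P_{H_z}(z)-z=2P_K(z)-z=R_K(z)$, so the reflection across the convex set $K$ coincides with the reflection across the hyperplane $H_z$ at $z$, and therefore also $R_UR_K(z)=R_UR_{H_z}(z)$. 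Hence the triple defining $\CC(z)$ is identical to $\{z,R_{H_z}(z),R_UR_{H_z}(z)\}$, and the two circumcenters agree.

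The remaining point, which I expect to be the main obstacle, is to verify that $H_z\cap U\neq\emptyset$ so that \Cref{lemma:CRMhyperplane} actually applies. I would argue by a continuity and convexity argument: pick any $s\in K\cap U$, which exists by hypothesis. The affine functional $\phi(x)\coloneqq (x-P_K(z))^{T}(z-P_K(z))$ satisfies $\phi(z)=\|z-P_K(z)\|^2>0$, while the projection inequality for convex sets gives $\phi(s)\le 0$ because $s\in K$. Since $z,s\in U$ and $U$ is affine, the whole segment $[s,z]$ lies in $U$; as $\phi$ is affine and changes sign weakly along this segment, the intermediate value theorem produces $w\in[s,z]\subset U$ with $\phi(w)=0$, that is, $w\in H_z\cap U$. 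Thus $H_z\cap U$ is nonempty.

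With $H_z$ a hyperplane, $U$ an affine subspace, and $H_z\cap U\neq\emptyset$, \Cref{lemma:CRMhyperplane} yields at once that $\circum\{z,R_{H_z}(z),R_UR_{H_z}(z)\}$ is well-defined (existence and uniqueness ultimately coming from \Cref{charact_circ}) and equals $P_{H_z\cap U}(z)$. Combining this with the identification of the two triples from the second paragraph gives $\CC(z)=P_{H_z\cap U}(z)$, which in particular lies in $H_z\cap U\subset U$; hence $\CC(z)$ is well-defined and $C(z)\in U$, completing the proof. The only genuinely non-automatic ingredient is the nonemptiness of $H_z\cap U$; everything else is a clean transfer of the hyperplane result through the identity $R_K(z)=R_{H_z}(z)$.
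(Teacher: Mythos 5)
Your proof is correct and follows essentially the same route as the paper: reduce to the hyperplane case via the identity $R_{K}(z)=R_{H_z}(z)$, establish $H_z\cap U\neq\varnothing$ by an intermediate-value argument along the segment in $U$ joining $z$ to a point of $K\cap U$, and then invoke \Cref{lemma:CRMhyperplane}. Your version is in fact slightly more explicit than the paper's, since you justify $P_{H_z}(z)=P_K(z)$ rather than stating it as obvious.
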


\begin{proof}
Let $z\in U$. If $z\in K$, the result is trivial. So assume that $z$ belongs to $U$, but not to $K$. Then, we have of course that $P_{H_z}(z)=P_K(z)$ and thus $R_{H_z}(z)=R_K(z)$. Note that $H_{z}\cap U\neq \varnothing$. In fact, let $y\in K\cap U$. Thus, by $y \in K$ and by the characterization of projection on nonempty closed convex set, $  \left(z-P_{K}(z)\right)^{T} \left (y-P_{K}(z)\right) \leq 0$.  Define $f: [0,1] \rightarrow \RR$ by $  f(t)\coloneqq\left( z-P_{K}(z)\right)^{T} \left(t z+(1-t) y-P_{K}(z)\right)$. Then $f(0)= \left(z- P_{K}(z)\right)^{T} \left(y-P_{K}(z)\right) \leq 0$ and  $f(1)=\left( z-P_{K}(z)\right)^{T}\left(z-P_{K}(z)\right)=\left\|z-P_{K}(z)\right\|^{2}>0$. Since  $f$ is continuous,   there exists $\bar{t} \in [0,1[$   such that $f(\bar t) =  \left( z-P_{K}(z)\right)^{T} \left(\bar t z+(1-\bar t) y-P_{K}(z)\right) = 0$. Thus, 
$\bar{t} z+(1-\bar{t}) y \in H_{z}$.  On the other hand, as both $z $ and $ y $ are in $U$, an affine subspace, we have $\bar{t} z+(1-\bar{t}) y \in U$ as well. Altogether, $\bar{t} z+(1-\bar{t}) y \in H_{z} \cap U$.

Therefore, $\CC(z)=\circum\{z,R_{K}(z),R_{U}R_{K}(z)\}=\circum\{z,R_{H_z}(z),R_{U}R_{H_z}(z)\}=P_{H_z\cap U}(z)$, where the last equality follows by employing~\Cref{lemma:CRMhyperplane}.

\qed 
\end{proof}

Below, in \Cref{fig:illustration-KcapU}, we  illustrate geometrically  what has been established in \Cref{lemma:goodCRMKcapU}.
\begin{figure}[H]\centering
\centering
\includegraphics[width=.55\textwidth]{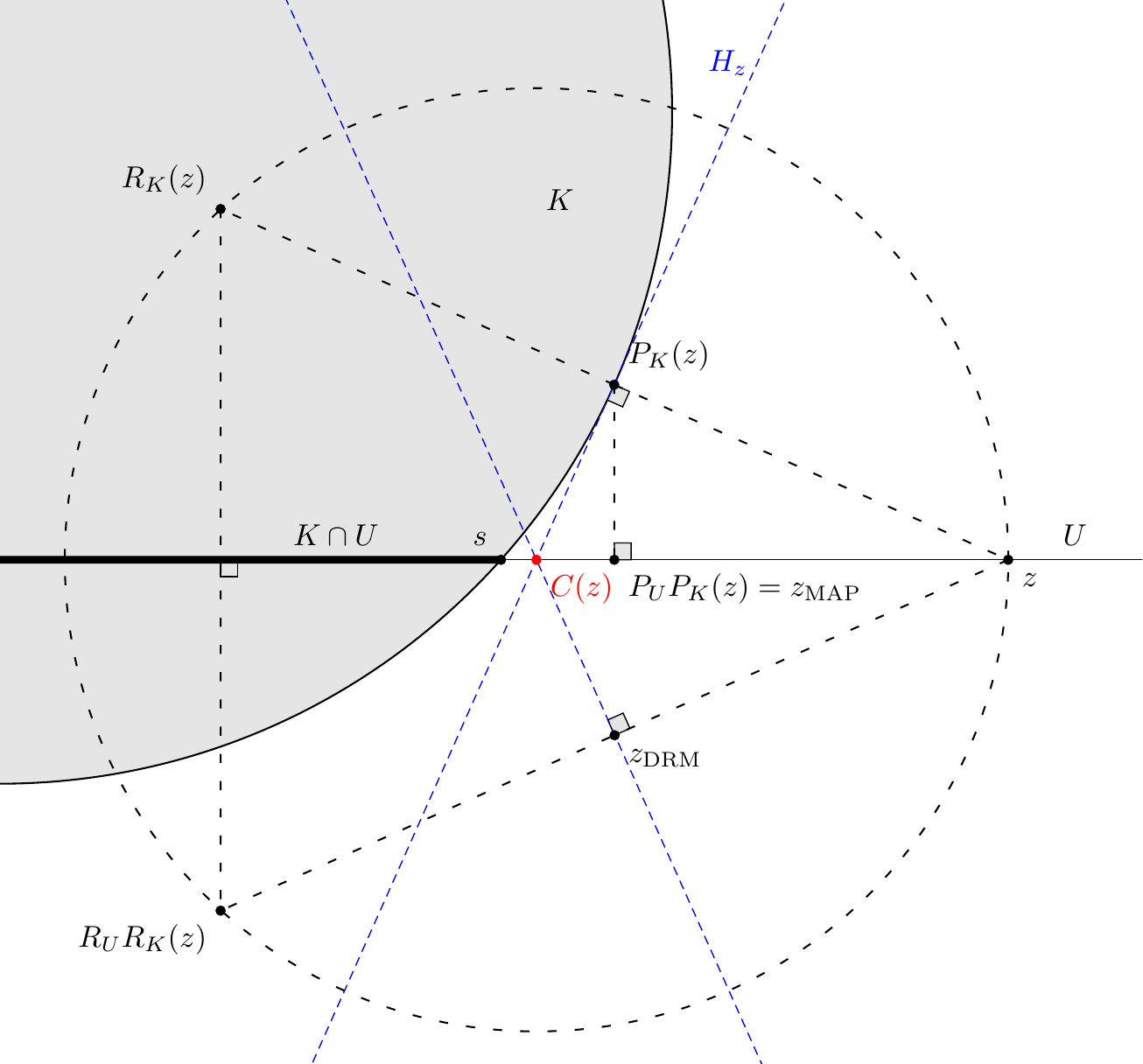}
   \caption{Illustration of CRM for the intersection between an affine $U$ and a convex $K$.}
  \label{fig:illustration-KcapU}
\end{figure}

Let us characterize the fixed point set of the circumcenter operator.

\begin{lemma}[fixed points of $\CC$]\label{Fix-CT} Assume $K,U\subset \RR^{n}$ as in~\Cref{lemma:goodCRMKcapU} and consider $\Fix \CC\coloneqq  \{z\in \dom(\CC)\mid C(z)=z\}$. Then,
\[\Fix \CC=K\cap U. \]
\end{lemma}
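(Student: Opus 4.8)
The plan is to establish the two inclusions $K\cap U\subseteq\Fix\CC$ and $\Fix\CC\subseteq K\cap U$ separately, using nothing more than \Cref{CircDef}. Both directions turn out to be elementary once one observes that a point is fixed by $\CC$ precisely when the three vertices $z$, $R_K(z)$, $R_UR_K(z)$ that define the circumcenter collapse onto $z$ itself.

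For the inclusion $K\cap U\subseteq\Fix\CC$, I would take $z\in K\cap U$. Since $z\in K$ we have $P_K(z)=z$, hence $R_K(z)=z$; and since $z\in U$ we get $R_UR_K(z)=R_U(z)=z$. Thus the three defining points all coincide with $z$, the affine hull appearing in \Cref{CircDef}(ii) reduces to the singleton $\{z\}$, and the only candidate for the circumcenter is $z$ itself. In particular $\CC(z)$ is well-defined, so $z\in\dom(\CC)$ and $\CC(z)=z$, giving $z\in\Fix\CC$.

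For the reverse inclusion $\Fix\CC\subseteq K\cap U$, I would take $z\in\Fix\CC$, so that $\CC(z)=z$. The key step is to feed $\CC(z)=z$ into the equidistance property \Cref{CircDef}(i): since $\|z-\CC(z)\|=0$, the common value of the three equal distances is $0$, whence $\|R_K(z)-z\|=0$ and $\|R_UR_K(z)-z\|=0$. The first equality forces $R_K(z)=z$, i.e.\ $2P_K(z)-z=z$, so $P_K(z)=z$ and $z\in K$. Substituting $R_K(z)=z$ into the second equality yields $R_U(z)=z$, i.e.\ $P_U(z)=z$, so $z\in U$. Therefore $z\in K\cap U$.

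The argument is short, so there is no serious obstacle; the only point that deserves care is that the inclusion $\Fix\CC\subseteq K\cap U$ should not presuppose $z\in U$, since $\dom(\CC)$ is defined over all of $\RR^{n}$. The membership $z\in U$ is instead extracted \emph{a posteriori} from the vanishing of $\|R_UR_K(z)-z\|$. This is exactly why it suffices to invoke the bare definition of the circumcenter rather than the finer characterization of \Cref{lemma:goodCRMKcapU}, which would require $z\in U$ from the outset.
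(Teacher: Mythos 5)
Your proof is correct and follows essentially the same route as the paper's: the paper's one-line argument ``$\CC(z)=z$, or equivalently $z=R_K(z)=R_UR_K(z)$'' is precisely the equidistance argument you spell out, followed by the same deduction $z\in K$ and then $z=R_U(z)$, hence $z\in U$. Your extra remarks (the affine hull collapsing to $\{z\}$ in the forward inclusion, and not presupposing $z\in U$ in the reverse one) are just a more careful rendering of the same idea.
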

\begin{proof} Clearly if $z\in K\cap U$ then $z\in \Fix \CC$. Conversely, take $ z\in \Fix \CC$, that is, $\CC(z)=z$, or equivalently $z=R_K(z)=R_UR_K(z)$. So, $z\in K$ and then $z=R_U(z)$. Hence, $z\in U$, proving the lemma.

\qed
\end{proof}

Next, we derive a firmly nonexpansiveness property of the circumcenter operator $\CC$ restricted to $U$, known  as  firmly quasinonexpansiveness~\cite[Definition 4.1(iv)]{BC2011}.

%%%%%%%%%%%%%%%%%%%%%%%%%%%%%%%%%%%

\begin{lemma}[firmly quasinonexpansiveness of CRM]\label{lemma-quasinonexp}
Assume $K,U\subset \RR^{n}$ as in~\Cref{lemma:goodCRMKcapU}. Then, for any  $z\in U$ and  $s\in K\cap U$ 
\[\label{eq:lemmaqnonexpansCRM}\|\CC(z)-s\|^2 \le \|z-s\|^2-\|z-\CC(z)\|^2.
\] Moreover,
\[\label{eq:lemmaqnonexpansCRM2}\dist^2(\CC(z), K\cap U)\le \dist^2(z, K\cap U)-\|z-\CC(z)\|^2.
\]
\end{lemma}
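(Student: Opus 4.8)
The plan is to reduce both inequalities to the (firm) nonexpansiveness of an orthogonal projection, by exploiting the explicit characterization $\CC(z)=P_{H_z\cap U}(z)$ furnished by \Cref{lemma:goodCRMKcapU}. First I would dispose of the trivial case $z\in K$: then $z\in K\cap U=\Fix\CC$ by \Cref{Fix-CT}, so $\CC(z)=z$ and both displayed inequalities hold with equality (the subtracted term $\|z-\CC(z)\|^2$ vanishing). Hence I may assume $z\in U\setminus K$, and set $p:=P_K(z)$, $n:=z-p\neq 0$, and $\bar z:=\CC(z)=P_{H_z\cap U}(z)$, where $H_z=\{x\mid n^{T}(x-p)=0\}$ and the linear subspace parallel to $U$ is denoted $V$.

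For the first inequality I would expand $\|z-s\|^2=\|z-\bar z\|^2+\|\bar z-s\|^2+2(z-\bar z)^{T}(\bar z-s)$, which shows that \eqref{eq:lemmaqnonexpansCRM} is equivalent to the single sign condition $(z-\bar z)^{T}(s-\bar z)\le 0$ for every $s\in K\cap U$. To obtain it, I would record two facts. First, the projection inequality for $P_K$ gives $n^{T}(s-p)\le 0$ for all $s\in K$, so $K\cap U$ is contained in the half-space $H_z^{-}:=\{x\mid n^{T}(x-p)\le 0\}$, while $n^{T}(z-p)=\|n\|^2>0$ places $z$ strictly on the opposite side. Second, I would argue that $\bar z=P_{H_z\cap U}(z)$ in fact coincides with $P_{H_z^{-}\cap U}(z)$: inside the affine space $U$ the set $H_z^{-}\cap U$ is a half-space whose bounding hyperplane is $H_z\cap U$ (a proper hyperplane of $U$ since $z\in U\setminus H_z$, and nonempty by \Cref{lemma:goodCRMKcapU}), and projecting a point lying strictly outside a half-space returns the projection onto its bounding hyperplane. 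Granting this, $s\in K\cap U\subseteq H_z^{-}\cap U=:C$ together with the standard projection inequality $(z-P_C(z))^{T}(s-P_C(z))\le 0$ gives exactly the required sign; equivalently, firm nonexpansiveness of $P_C$ evaluated at the fixed point $s=P_C(s)$ yields $\|\bar z-s\|^2+\|z-\bar z\|^2\le\|z-s\|^2$ at once.

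The main obstacle is the rigorous justification of $\bar z=P_{H_z^{-}\cap U}(z)$, and I expect to handle it by pinning down the direction of $z-\bar z$. Since $\bar z$ is the projection of $z$ onto the affine set $H_z\cap U$, whose parallel subspace is $V\cap n^{\perp}$, the vector $z-\bar z$ is orthogonal to $V\cap n^{\perp}$ while lying in $V$; this forces $z-\bar z$ to be a scalar multiple of $P_V(n)$, and evaluating $n^{T}(z-\bar z)=\|n\|^2>0$ shows the scalar $\alpha$ is positive. Consequently, for any $x\in H_z^{-}\cap U$ one gets $(z-\bar z)^{T}(x-\bar z)=\alpha\,n^{T}(x-p)\le 0$, which simultaneously certifies $\bar z=P_{H_z^{-}\cap U}(z)$ and delivers the sign condition needed above.

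Finally, for the \emph{Moreover} part I would specialize \eqref{eq:lemmaqnonexpansCRM} to the particular feasible point $s:=P_{K\cap U}(z)$, so that $\|z-s\|^2=\dist^2(z,K\cap U)$, and then use $\dist^2(\CC(z),K\cap U)\le\|\CC(z)-s\|^2$ because $s\in K\cap U$. Chaining these two observations with the first inequality yields $\dist^2(\CC(z),K\cap U)\le\dist^2(z,K\cap U)-\|z-\CC(z)\|^2$, which is \eqref{eq:lemmaqnonexpansCRM2} and completes the argument.
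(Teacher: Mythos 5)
Your proposal is correct and follows essentially the same route as the paper: both reduce the claim, via the characterization $\CC(z)=P_{H_z\cap U}(z)$ from \Cref{lemma:goodCRMKcapU}, to the projection onto the half-space set $H_z^{+}\cap U\supseteq K\cap U$ (your $H_z^{-}\cap U$), then exploit the firm-nonexpansiveness/variational characterization of that projection at the fixed point $s$, and finish the second inequality with the identical specialization $s=P_{K\cap U}(z)$. The only difference is one of detail: you prove the identification $P_{H_z\cap U}(z)=P_{H_z^{+}\cap U}(z)$ explicitly by pinning down the direction of $z-\CC(z)$, whereas the paper asserts it in one line from the fact that $H_z$ is the boundary of $H_z^{+}$ and $z\notin H_z^{+}$.
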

\begin{proof}  Let $z\in U$. Define $H_z\coloneqq  \{h\in \RR^{n} \mid (h-P_K(z))^{T}(z-P_K(z))= 0\}$ and $H^+_z\coloneqq  \{h\in \RR^{n}\mid  (h-P_K(z))^{T}(z-P_K(z))\le 0\}$. It is clear that $K\subseteq H^+_z$, and so $K\cap U\subseteq H_z^+\cap U$. If $z\in H_{z}$, then $z\in K\cap U$ and, in this case, $\CC(z)= z$, in view of \Cref{Fix-CT}, and the claims follow easily.

 Let $z\notin H_z$.  We have that  $C(z)$ coincides with $\circum\{z,R_{H_z}(z),R_UR_{H_z}(z)\}$, since $R_{K}(z) = R_{H_{z}}(z)$. From \Cref{lemma:CRMhyperplane}, it follows that $C(z)=P_{H_z\cap U}(z)\in U$. As consequence of $z$ not being in $H_{z}$ we immediately get that $z$ cannot be in $K$. Also, it easily follows that $z\notin H_{z}^{+}$. This, combined with the fact that $H_{z}$ is the boundary of $H_{z}^{+}$, gives us  that $P_{H_z\cap U}(z)=P_{H^+_z\cap U}(z)$.  Now, taking  into account  that, for any $s\in K\cap U$, $P_{H^+_z\cap U}(s)=s$, we have 
\begin{align}\|C(z)-s\|^2 &=\|P_{H_z\cap U}(z)-s\|^2\\
&=\|P_{H^+_z\cap U}(z)-P_{H^+_z\cap U}(s)\|^2\\&\le \|z-s\|^2-\left \|[z-P_{H^+_z\cap U}(z)]-[s-P_{H^+_z\cap U}(s)]\right\|^2 \\
&= \|z-s\|^2-\|z-C(z)\|^2,
\end{align}
where the inequality is due to the firmly nonexpansiveness propriety of the projection~\cite[Proposition 4.16]{BC2011}. 
To prove the last part of the lemma, fix $s=\bar z\coloneqq P_{K\cap U}(z)$ and use \eqref{eq:lemmaqnonexpansCRM} in order to get 
\begin{align}\label{(4''')}\dist^2(C(z), K\cap U)&\le \|C(z)-\bar z\|^2 % \\ &
\le \|z-\bar z\|^2-\|z-C(z)\|^2 \\ &
=\dist^2(z,K\cap U)-\|z-C(z)\|^2.
\end{align}
\qed
\end{proof}
%Unfortunately, the nonexpansiveness of  CRM fails when it starts out of $D$. ....  {\bf COLOCAR ESTE REMARK nas Conclusões} ....

We arrive now at the key result of our study, which establishes CRM as a tool for finding a point in $K\cap U$ whenever the initial point is chosen in $U$.

\begin{theorem}[convergence of CRM]\label{TeoremaSpan}
Assume $K,U\subset \RR^{n}$ as in \Cref{lemma:goodCRMKcapU} and let $z\in U$ be given. Then, the CRM sequence $(C^k(z))_{k\in\NN}$ is well-defined, contained in $U$ and converges to a point in $K\cap U$.
\end{theorem}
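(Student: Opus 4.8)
The plan is to run the classical Fejér-monotone convergence argument, feeding it the three structural lemmas just established. Write $z^0\coloneqq z$ and $z^{k+1}\coloneqq \CC(z^k)$. First I would show by induction that the sequence is well-defined and stays in $U$: since $z^0\in U$, \Cref{lemma:goodCRMKcapU} guarantees that $\CC(z^0)$ exists and lies in $U$; feeding $z^1\in U$ back into the same lemma yields that $z^2$ is well-defined and in $U$, and so on. Next, fix any $s\in K\cap U$. Reading the first inequality of \Cref{lemma-quasinonexp} at $z=z^k$ gives $\|z^{k+1}-s\|^2\le \|z^k-s\|^2-\|z^k-z^{k+1}\|^2$, so the sequence $(\|z^k-s\|)_{k\in\NN}$ is nonincreasing, hence convergent, and $(z^k)_{k\in\NN}$ is bounded.

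Summing the telescoping inequality above over $k=0,\ldots,N$ yields $\sum_{k=0}^{N}\|z^k-z^{k+1}\|^2\le \|z^0-s\|^2$, so the series $\sum_k\|z^k-\CC(z^k)\|^2$ converges and therefore $\|z^k-\CC(z^k)\|\to 0$. This asymptotic regularity is the quantity that will drive feasibility.

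The crux — and the step I expect to be the main obstacle — is converting this asymptotic regularity into feasibility of the cluster points, since $\CC$ need not be continuous and one cannot simply pass to the limit in the fixed-point equation. The remedy is the explicit description of $\CC$ from \Cref{lemma:goodCRMKcapU}: whenever $z^k\notin K$, we have $\CC(z^k)=P_{H_{z^k}\cap U}(z^k)$ with $H_{z^k}$ the hyperplane through $P_K(z^k)$ orthogonal to $z^k-P_K(z^k)$. Since $H_{z^k}\cap U\subseteq H_{z^k}$, and the foot of the perpendicular from $z^k$ onto $H_{z^k}$ is exactly $P_K(z^k)$, I obtain
\[
\|z^k-\CC(z^k)\|=\dist(z^k,H_{z^k}\cap U)\ge \dist(z^k,H_{z^k})=\|z^k-P_K(z^k)\|=\dist(z^k,K),
\]
an inequality that holds trivially when $z^k\in K$. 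Hence $\dist(z^k,K)\le \|z^k-\CC(z^k)\|\to 0$.

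Finally I would conclude. Being bounded, $(z^k)_{k\in\NN}$ has a cluster point $z^{*}$, say $z^{k_j}\to z^{*}$. Closedness of $U$ gives $z^{*}\in U$, while $\dist(z^{*},K)=\lim_j\dist(z^{k_j},K)=0$ together with closedness of $K$ gives $z^{*}\in K$; thus $z^{*}\in K\cap U$ (consistent with $\Fix\CC=K\cap U$ from \Cref{Fix-CT}). Applying the Fejér inequality once more with $s=z^{*}$ shows that $(\|z^k-z^{*}\|)_{k\in\NN}$ converges, and since it admits the subsequential limit $0$, the entire sequence converges: $z^k\to z^{*}\in K\cap U$, which is the assertion.
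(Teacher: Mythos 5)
Your proof is correct, and its skeleton---well-definedness and invariance of $U$ via \Cref{lemma:goodCRMKcapU}, Fej\'er monotonicity via \Cref{lemma-quasinonexp}, telescoping to obtain asymptotic regularity, feasibility of cluster points, and uniqueness of the cluster point---is the same as the paper's. The one place where you genuinely diverge is the feasibility step. The paper passes to a convergent subsequence $z^{i_k}\to\hat z$, invokes the circumcenter's defining equidistance property to write $\|z^{i_k}-z^{i_k+1}\|=\|z^{i_k+1}-R_K(z^{i_k})\|$, and then uses continuity of the reflection $R_K$ to conclude $\hat z=R_K(\hat z)$, i.e.\ $\hat z\in K$. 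You instead exploit the hyperplane characterization $C(z^k)=P_{H_{z^k}\cap U}(z^k)$ to derive the pointwise inequality $\dist(z^k,K)\le\|z^k-C(z^k)\|$, which converts asymptotic regularity directly into $\dist(z^k,K)\to 0$, after which only continuity of the distance function is needed. The two mechanisms carry essentially the same content---the equidistance property also gives $2\dist(z^k,K)=\|z^k-R_K(z^k)\|\le 2\|z^k-C(z^k)\|$ by the triangle inequality---but your version is arguably cleaner: it produces an explicit residual bound valid at every iterate, requires no subsequence bookkeeping for the feasibility argument, and sidesteps the continuity of $R_K$ entirely. Everything else (closedness of $U$ and $K$, the final Fej\'er argument with $s=z^{*}$ upgrading subsequential to full convergence) matches the paper's reasoning.
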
 
\begin{proof} The well-definedness of $(C^k(z))_{k\in\NN}$ as well as its pertinence to $U$ are due to \Cref{lemma:goodCRMKcapU}. From \Cref{lemma-quasinonexp}, we have, for any $z\in U$, $s\in K\cap U$ and $\ell\in \NN$
\begin{align}
\|z^{\ell+1}-s\|^2&=\|C^{\ell+1}(z)-s\|^2 \\&= \|C(C^{\ell}(z))-s\|^2\\&\le\|C^{\ell}(z)-s\|^2 -\|C^{\ell}(z)-C(C^{\ell}(z))\|^2\\&=\|C^{\ell}(z)-s\|^2 -\|C^{\ell}(z)-C^{\ell+1}(z)\|^2\\&=\|z^{\ell}-s\|^2-\|z^\ell-z^{\ell+1}\|^2. \end{align} Hence, \[\label{MonotoneCRMseq}\|z^\ell-z^{\ell+1}\|^2\le \|z^{\ell}-s\|^2-\|z^{\ell+1}-s\|^2, \quad \forall \ell\in\NN.\] Summing from $\ell=0$ to $m$, we have
\[\sum_{\ell=0}^m\|z^\ell-z^{\ell+1}\|^2\le \sum_{\ell=0}^m (\|z^{\ell}-s\|^2-\|z^{\ell+1}-s\|^2)=\|z^{0}-s\|^2-\|z^{m+1}-s\|^2\le \|z^{0}-s\|^2.\] Taking limits as $m\to+\infty$, we get the summability of the associated series and so, 
$z^k-z^{k+1}$ converges to $0$ as $k\to+\infty$. 

Moreover, the sequence $(z^k)_{k\in\NN}$ is bounded because of \eqref{eq:lemmaqnonexpansCRM} in \Cref{lemma-quasinonexp}. That is, \[\|z^k-s\|=\|C^k(z)-s\|\le \|C^{k-1}(z)-s\|\le \cdots \le \|z-s\|.\] Let $\hat z$ be any cluster point of the sequence $(z^k)_{k\in\NN}$ and denote $(z^{i_k})_{k\in\NN}$ an associated convergent subsequence to $\hat z$. Note further that the fact $z^{i_k}-z^{i_k+1}\to 0$ implies $z^{i_k+1}\to \hat z$. We claim that $\hat z\in K\cap U$. Since $U$ is closed and $(z^k)_{k\in\NN}$ is contained in $U$, we must have $\hat z\in U$. By the definition of $C$ we have that
\[
\|z^{i_k}-z^{i_k+1}\|=\|z^{i_k}-C(z^{i_k})\|=\|C(z^{i_k})-R_K(z^{i_k})\|=\|z^{i_k+1}-R_K(z^{i_k})\|.
\] 
So, $z^{i_k+1}-R_K(z^{i_k})$ converges to $0$. It follows from the continuity of the reflection onto $K$ and taking limits as $k\to+\infty$ in the last equality that $\hat z=R_K(\hat z)$. Hence, $\hat z\in K$ proving the claim.

% \begin{align}
% \|\hat z-C(\hat z)\|^2=&\|z^{i_k}-C(z^{i_k})\|^2+2\langle z^{i_k}-C(z^{i_k}), C(z^{i_k})-C(\hat z)\rangle-2\langle z^{i_k}-\hat z,\hat z-C(\hat z)\rangle\\&+\|C(z^{i_k})-C(\hat z)\|^2-\|z^{i_k}-\hat z\|^2\\\le & \|z^{i_k}-C(z^{i_k})\|^2+2\langle z^{i_k}-C(z^{i_k}), C(z^{i_k})-C(\hat z)\rangle-2\langle z^{i_k}-\hat z,\hat z-C(\hat z)\rangle,
% \end{align} using \Cref{lemma-quasinonexp} and the fact that $C(\hat z)$??? in the last inequality. Using that $z^{i_k}-C(z^{i_k})=z^{i_k}-z^{i_k+1}$ converges to $0$ and $z^{i_k}$ converges to $\hat z$, we get, after taking the limit as $k\to +\infty$ in the last inequality, that $\hat z-C(\hat z)=0$ or equivalent $\hat z\in \Fix \CC=X\cap D$. 

Therefore, so far we have that $(z^k)_{k\in\NN}$ is bounded, contained in $U$ and all its cluster points are in $K\cap U$. 
We could now complete the proof using standard Fej\'er monotonicity results (see \cite[Theorem 5.5]{BC2011} or \cite[Theorem 2.16(v)]{Bauschke:2006ej}). However, for the sake of self-containment, we show that all these cluster points are equal and hence $(z^k)_{k\in\NN}$ converges to a point in $K\cap U$. Let $\tilde{z}$, $\hat{z}$ be two 
accumulation points of $(z^k)_{k\in\NN}$ and $(z^{j_k})_{k\in\NN}$,
$(z^{i_k})_{k\in\NN}$ be subsequences  
convergent to $\tilde{z}$, $\hat{z}$ respectively. The real sequence $(\|z^k-\hat z\|)_{k\in\NN}$ is convergent because it is bounded below by zero and from \eqref{MonotoneCRMseq} it is monotone non-increasing. Thus, 
\[\|\tilde{z}-\hat{z}\|=\lim_{k\to+\infty}\|z^{j_k}-\hat z\|=\lim_{k\to+\infty}\|z^k-\hat z\|=\lim_{k\to+\infty}\|z^{i_k}-\hat z\|=0,\] establishing the desired result.

%\begin{equation}\label{(10*)}
%\|z^{j_k}-\hat{z}\|^2=\|z^{j_k}-\bar{z}\|^2+\|\bar{z}-\hat{z}\|^2+2\langle
%z^{j_k}-\bar{z},\bar{z}-\hat{z}\rangle,\end{equation}
%%\begin{equation}\label{(11)}
%%\|z^{i_k}-\bar{z}\|^2=\|z^{i_k}-\hat{z}\|^2+\|\bar{z}-\hat{z}\|^2+2\langle
%%z^{i_k}-\hat{z},\hat{z}-\bar{z}\rangle.
%%\end{equation}
%Take limits in \eqref{(10*)}  as $k$ goes to
%$+\infty$  and using the definitions of $\pi$, we get $\pi=\|\bar{z}-\hat{z}\|=\lim_{k\to+\infty}\|z^{i_k}-\hat z\|=0$. Thus, $\bar z=\bar z$.

%It follows from Theorem \ref{H-Teo} that $(z^k)_{k\in\NN}$ converges to a point  $\hat z\in \Fix \CC=X\cap D$.
\qed\end{proof}
%I conjecture that the convergence is to $P_{D\cap X}(z)$ 

We present now a newsworthy nonconvex instance covered by \Cref{TeoremaSpan}.

\begin{remark}[line-sphere intersection]\label{remark:sphere}
A straightforward  consequence  of \Cref{TeoremaSpan} is the convergence of CRM for finding a point in the  intersection (when nonempty) of  a given closed ball and a line. 
Interestingly, this remark  almost promptly gives a convergence result for  CRM applied to solving the nonconvex intersection problem involving a line crossing a sphere. More precisely, for the latter, having an iterate on the line, CRM moves us out of the sphere and thereon CRM acts as if the sphere were a ball and convergence is guaranteed. An exception for moving out of the sphere occurs when the iterate is on the line and also happens to be the center of the sphere. In this case, due to nonconvexity, the projection operator is set-valued but one only needs to pick one of the two points in the direction of the line crossing the sphere as a projection and then CRM converges in one single step.  The relevance of this remark relies on the fact that the definitive proof of DRM for this problem took three papers to be completely derived~\cite{Benoist:2015,AragonArtacho:2013,Borwein:2011}.
\end{remark}

A nice consequence of \Cref{TeoremaSpan} regards convex intersection problems in which one of the sets is affine. 

\begin{corollary}[serial composition of circumcenters]\label{cor:serialconv}
 Let $K \coloneqq \bigcap_{i=1}^{N}K_{i}$, where $K_i$ is convex, for $i=1,\ldots,N$, and $U$ be an affine subspace and suppose $K\cap U\neq \varnothing$. Then, for all $z\in U$, 
 $\tilde C(z) \coloneqq C_{(K_{N}, U)} \circ \cdots\circ  C_{(K_{1}, U)}(z)$ is well-defined, with $C_{(K_{i}, U)} (z)$ being $\circum\{z,R_{K_{i}}(z),R_{U}R_{K_{i}}(z)\}$, for $i=1,\ldots, N$. Moreover, the CRM sequence $(\tilde C^{k}(z))_{k\in\NN}$ is well-defined, contained in $U$ and converges to a point in $K\cap U$.
\end{corollary}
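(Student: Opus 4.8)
The plan is to reduce the entire statement to the single-set results already established, applied one factor at a time along each sweep. First I would record the elementary but crucial inclusion $K\cap U\subseteq K_i\cap U$ for every $i$: since $K\cap U\neq\varnothing$, each $K_i\cap U$ is nonempty, so \Cref{lemma:goodCRMKcapU} applies to every pair $(K_i,U)$ and guarantees that each $C_{(K_i,U)}$ is well-defined on $U$ and maps $U$ into $U$. Composing, I conclude that $\tilde{C}$ is well-defined on $U$ and sends $U$ into itself, whence by induction $\tilde{C}^k(z)\in U$ for all $k$ whenever $z\in U$. This disposes of well-definedness and of the containment in $U$.

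Next I would derive a firmly-quasinonexpansive-type estimate for a full sweep. Writing $w_0\coloneqq z$ and $w_i\coloneqq C_{(K_i,U)}(w_{i-1})$, so that $w_N=\tilde{C}(z)$, I fix any $s\in K\cap U$ (which lies in every $K_i\cap U$) and apply inequality \eqref{eq:lemmaqnonexpansCRM} of \Cref{lemma-quasinonexp} to each factor. Telescoping the resulting chain yields
\[\|\tilde{C}(z)-s\|^2\le \|z-s\|^2-\sum_{i=1}^{N}\|w_{i-1}-w_i\|^2.\]
Because each $C_{(K_i,U)}$ fixes $K\cap U$ (by \Cref{Fix-CT}, since $K\cap U\subseteq K_i\cap U=\Fix C_{(K_i,U)}$), so does $\tilde{C}$, and the displayed bound makes $\tilde{C}$ quasinonexpansive relative to $K\cap U$. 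Iterating with $z^k\coloneqq\tilde{C}^k(z)$ and intra-sweep points $w_i^k$ (so $w_0^k=z^k$ and $w_N^k=z^{k+1}$), summing the telescoped inequality over $k$ and using that $\|z^k-s\|$ is non-increasing gives $\sum_k\sum_{i=1}^{N}\|w_{i-1}^k-w_i^k\|^2<\infty$. Hence every intermediate displacement $w_{i-1}^k-w_i^k\to 0$ as $k\to\infty$, and Fej\'er monotonicity yields boundedness of $(z^k)_{k\in\NN}$.

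Then I would analyze cluster points exactly as in the proof of \Cref{TeoremaSpan}. Given a subsequence $z^{i_k}\to\hat z$, a cascade argument (using $w_{i-1}^k-w_i^k\to 0$) shows that $w_0^{i_k},w_1^{i_k},\ldots,w_N^{i_k}$ all converge to $\hat z$. The equidistance property defining the circumcenter gives, for each $i$, $\|w_{i-1}^k-w_i^k\|=\|R_{K_i}(w_{i-1}^k)-w_i^k\|$; passing to the limit along the subsequence and invoking continuity of $R_{K_i}$ forces $R_{K_i}(\hat z)=\hat z$, that is $\hat z\in K_i$. Since this holds for every $i$, we get $\hat z\in\bigcap_{i=1}^N K_i=K$, and $\hat z\in U$ because $U$ is closed; thus $\hat z\in K\cap U$. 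Uniqueness of the cluster point follows from Fej\'er monotonicity verbatim as in \Cref{TeoremaSpan} (or by quoting a standard Fej\'er convergence theorem), delivering convergence of $(\tilde{C}^k(z))_{k\in\NN}$ to a point of $K\cap U$.

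I expect the only genuinely delicate point to be the verification that accumulation points lie in the full intersection $K$ rather than in a single $K_i$. This is precisely where telescoping over all $N$ factors is indispensable: it forces \emph{each} of the $N$ intra-sweep displacements to vanish, not merely the net displacement $z^k-z^{k+1}$, so that the circumcenter equidistance identity can be invoked separately for every $K_i$. Everything else is a faithful transcription of the single-set argument of \Cref{TeoremaSpan}, the sole structural input being the harmless observation that $K\cap U$ is a common subset of all the $K_i\cap U$.
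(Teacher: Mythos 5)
Your proof is correct, and its skeleton---per-factor quasinonexpansiveness via \Cref{lemma-quasinonexp}, telescoping, then Fej\'er monotonicity---is the same as the paper's; but the technical route differs in two respects worth comparing. The paper proves the case $N=2$ and appeals to induction for general $N$, and after the same two-step telescoping it applies the parallelogram identity (Corollary 2.15 of \cite{BC2011}) to compress the two intra-sweep terms into $\tfrac{1}{2}\|z-\tilde C(z)\|^{2}$, so that $\tilde C$ itself is exhibited as firmly quasinonexpansive with constant $\tfrac{1}{2}$; the rest of the convergence argument is then delegated to ``the same arguments as \Cref{TeoremaSpan}.'' You instead treat general $N$ in a single pass and retain the full telescoped sum $\sum_{i=1}^{N}\|w_{i-1}-w_{i}\|^{2}$. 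That choice is not cosmetic: since circumcenter operators are not known to be continuous, the cluster-point step cannot be run on $z^{k}-z^{k+1}\to 0$ alone---one needs each intra-sweep displacement to vanish so that the equidistance identity of \Cref{CircDef} can be invoked separately for every $K_{i}$, which is exactly the delicate point you flag and resolve. So your version makes fully explicit what the paper's compression discards (and what the paper implicitly recovers from its uncompressed inequality when it invokes the arguments of \Cref{TeoremaSpan}), at the price of not producing the clean statement that $\tilde C$ is firmly quasinonexpansive in the standard sense, which is what the paper's extra parallelogram step buys---namely, direct access to off-the-shelf Fej\'er-monotonicity theory for such operators.
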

\begin{proof}
We present the proof for when $N=2$ and an induction argument suffices for the general case. Thus, consider  $\tilde C(z) \coloneqq C_{(K_{2}, U)} \circ   C_{(K_{1}, U)}(z)$ and set $C_{1}\coloneqq C_{(K_{1}, U)} $, $C_{2}\coloneqq C_{(K_{2}, U)}$.  
Note that we can essentially proceed as in the proof of \Cref{Fix-CT} to show that
$\Fix \tilde C = K_{1}\cap K_{2} \cap U$.  In the following, we derive a similar inequality to \eqref{eq:lemmaqnonexpansCRM}, given in \Cref{lemma-quasinonexp}. Let $s\in K_{1}\cap K_{2} \cap U$ and $z\in U$.
Then, 
\[\begin{aligned}\|\tilde C(z)-s\|^{2} & = \| C_{2}(C_{1}(z))-s\|^{2} \\
& \leq\left\|C_{1}( z)-s\right\|^{2}-\left\|C_{1}( z)-\tilde C(z)\right\|^{2} \\ 
& \leq\|z-s\|^{2}-\left\|z-C_{1}( z)\right\|^{2}-\left\|C_{1}( z)-\tilde C(z)\right\|^{2} \\ 
&=\|z-s\|^{2}-  2\left[\frac{1}{2}\left\|z-C_{1}( z)\right\|^{2}+\frac{1}{2}\left\|C_{1}( z)-\tilde C(z)\right\|^{2}\right] \\ 
&=\|z-s\|^{2}-2\left[\left\|\frac{1}{2}\left(z-C_{1}( z)\right)+\frac{1}{2}\left(C_{1}( z)-\tilde C(z)\right)\right\|^{2}+\frac{1}{4}\|z - 2C_{1}(z) +  \tilde C(z)\|^{2}\right]\\
&\leq\|z-s\|^{2}-2\left\|\frac{1}{2}(z-\tilde C(z))\right\|^{2}\\
&=\| z-s\|^{2}-\frac{1}{2} \| z-\tilde C(z) \|^{2},
\end{aligned}
\]
where we used \Cref{lemma-quasinonexp} for the first two inequalities  and Corollary 2.15 by \cite{BC2011} for the third equality.
Now, by employing the same arguments of \Cref{TeoremaSpan}, we can prove that the sequence $(\tilde C^{k}(z))_{k\in\NN}$ is bounded and has an accumulation point  in $\Fix \tilde C$  and the  converge is achieved upon Fej\'er monotonicity. 
\qed
\end{proof}

\begin{remark} We can directly employ a similar argument of Corollary 4.48 in \cite{BC2011} to get the same claim as  in  \Cref{cor:serialconv}   replacing its serial composition  by a convex combination of circumcenter operators of the form  
$\hat C(z) \coloneqq \sum_{i=1}^{N}\omega_{i}C_{(K_{i}, U)}(z)$, with $\omega_i\in ]0,1[$, for $i=1,\ldots,N$ and  $\sum_{i=1}^{N}\omega_{i} = 1$.  
\end{remark}

We close this section with a theorem stating that for a given iterate in an affine subspace $U$, CRM gets us closer to the solution set than both MAP and DRM.

\begin{theorem}[comparing CRM with MAP and DRM]\label{ComparisonCRM_MAP_DRM}
	Assume $K,U\subset \RR^{n}$ as in \Cref{lemma:goodCRMKcapU} and let $z\in U$ be given. Also recall the notation $z_{\MAP}\coloneqq P_UP_K(z)$, $z_{\DRM}\coloneqq \frac{1}{2}(\Id+R_UR_K)(z)$ and $C(z)\coloneqq\circum\{z,R_{K}(z),R_{U}R_{K}(z)\}$. Then, for any $s\in K\cap U$ we have
	\item [ \bf (i)] $
	\|C(z)-s\|\leq\|z_{\MAP}-s\|\leq\|z_{\DRM}-s\|,
	$
	\item [ \bf (ii)]	$
	\dist(C(z), K\cap U)\leq \dist(z_{\MAP}, K\cap U)\leq \dist(z_{\DRM}, K\cap U).
	$	
\end{theorem}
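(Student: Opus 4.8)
The plan is to reduce everything to two explicit formulas, for $C(z)$ and $z_{\MAP}$, and then to compare distances by elementary geometry on a single line. First I would dispose of the trivial case $z\in K$: then $z\in K\cap U$ and $P_K(z)=z$, so $C(z)=z_{\MAP}=z_{\DRM}=z$ and all inequalities hold with equality. So assume $z\notin K$ and set $n:=z-P_K(z)\neq0$; let $U_0$ be the linear subspace parallel to $U$ and $P_{U_0}$ the orthogonal projection onto it, so that $P_U(x)-P_U(y)=P_{U_0}(x-y)$. Using $R_K(z)=2P_K(z)-z$ and the affineness of $P_U$, a direct computation gives
\[
z_{\MAP}=P_UP_K(z)=z-P_{U_0}(n),\qquad z_{\DRM}=2\,z_{\MAP}-P_K(z).
\]
For $C(z)$ I would invoke \Cref{lemma:goodCRMKcapU}, giving $C(z)=P_{H_z\cap U}(z)$; writing $C(z)=z+v$ with $v\in U_0$, the optimality condition $v\perp(U_0\cap n^{\perp})$ forces $v$ to be a multiple of $P_{U_0}(n)$, and imposing $C(z)\in H_z$ fixes the scale, yielding
\[
C(z)=z-\frac{\|n\|^2}{\|P_{U_0}(n)\|^2}\,P_{U_0}(n),
\]
where $P_{U_0}(n)\neq0$, since otherwise every $x\in U$ would satisfy $(x-P_K(z))^Tn=\|n\|^2>0$, contradicting that $K\subseteq H_z^+$ meets $U$.

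The inequality $\|z_{\MAP}-s\|\le\|z_{\DRM}-s\|$ is then immediate by Pythagoras: from the formulas above $z_{\DRM}-z_{\MAP}=z_{\MAP}-P_K(z)=P_U(P_K(z))-P_K(z)$ is orthogonal to $U_0$, while $z_{\MAP}-s\in U_0$ for any $s\in K\cap U\subseteq U$; hence $\|z_{\DRM}-s\|^2=\|z_{\MAP}-s\|^2+\|z_{\DRM}-z_{\MAP}\|^2$.

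The heart of the argument is $\|C(z)-s\|\le\|z_{\MAP}-s\|$, and the key observation is that $z$, $z_{\MAP}$ and $C(z)$ are collinear: both $z_{\MAP}$ and $C(z)$ lie on the line $\beta\mapsto z-\beta\,P_{U_0}(n)$ contained in $U$, at parameters $\beta=1$ and $\beta=\alpha:=\|n\|^2/\|P_{U_0}(n)\|^2\ge1$ respectively (the last inequality because $P_{U_0}$ is nonexpansive). Fixing $s\in K\cap U$, the map $g(\beta):=\|z-\beta P_{U_0}(n)-s\|^2$ is an upward parabola with vertex at $\beta^{\ast}=(z-s)^TP_{U_0}(n)/\|P_{U_0}(n)\|^2$. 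Since $z-s\in U_0$ I may replace $P_{U_0}(n)$ by $n$ in the numerator, and expanding $(z-s)^Tn=\|n\|^2+(P_K(z)-s)^Tn$ together with the projection characterization $(s-P_K(z))^T(z-P_K(z))\le0$ yields $(z-s)^Tn\ge\|n\|^2$, hence $\beta^{\ast}\ge\alpha\ge1$. As $g$ is decreasing on $(-\infty,\beta^{\ast}]$ and $1\le\alpha\le\beta^{\ast}$, we get $g(\alpha)\le g(1)$, i.e.\ $\|C(z)-s\|\le\|z_{\MAP}-s\|$, completing (i). Geometrically, $z_{\MAP}$ and $C(z)$ sit on the same side of the foot of the perpendicular from $s$ to the line, with $C(z)$ the closer of the two.

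Finally, (ii) follows from (i) by evaluating at the nearest solution: taking $s=P_{K\cap U}(z_{\MAP})$ in the first inequality of (i) gives $\dist(C(z),K\cap U)\le\|C(z)-s\|\le\|z_{\MAP}-s\|=\dist(z_{\MAP},K\cap U)$, and taking $s=P_{K\cap U}(z_{\DRM})$ in the second gives $\dist(z_{\MAP},K\cap U)\le\dist(z_{\DRM},K\cap U)$. The main obstacle I anticipate is the step producing the closed form for $C(z)$ and recognizing the collinearity: once the three points lie on a common line through $z$ in direction $P_{U_0}(n)$, the comparison collapses to monotonicity of a one-dimensional quadratic, and the only delicate points are verifying $P_{U_0}(n)\neq0$ and the sign $(z-s)^Tn\ge\|n\|^2$ coming from the variational inequality for $P_K$.
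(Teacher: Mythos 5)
Your proposal is correct, and it establishes the same chain of inequalities through the same underlying mechanism as the paper — the characterization $C(z)=P_{H_z\cap U}(z)$ from \Cref{lemma:goodCRMKcapU}, the collinearity of $z$, $z_{\MAP}$ and $C(z)$, and the variational inequality $(s-P_K(z))^T(z-P_K(z))\le 0$ — but your execution is genuinely different. Where the paper argues synthetically (hypotenuse-versus-leg comparisons in right triangles; collinearity via the bisector of the isosceles triangle with vertices $z$, $R_K(z)$, $R_UR_K(z)$; the cosine rule for $\|C(z)-s\|\le\|z_{\MAP}-s\|$; and congruent triangles for $\|z_{\MAP}-s\|\le\|z_{\DRM}-s\|$), you work in coordinates: with $n\coloneqq z-P_K(z)$ you derive the closed forms $z_{\MAP}=z-P_{U_0}(n)$, $z_{\DRM}=2z_{\MAP}-P_K(z)$ and $C(z)=z-\bigl(\|n\|^2/\|P_{U_0}(n)\|^2\bigr)P_{U_0}(n)$, after which collinearity is automatic, the CRM-versus-MAP comparison collapses to monotonicity of the quadratic $g(\beta)=\|z-\beta P_{U_0}(n)-s\|^2$ on $(-\infty,\beta^{*}]$ with $\beta^{*}\ge\alpha\ge 1$, and MAP-versus-DRM is a one-line Pythagoras (the paper's congruence argument in disguise). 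I checked the delicate steps and they hold: the replacement $(z-s)^TP_{U_0}(n)=(z-s)^Tn$ is valid because $z-s\in U_0$ and $P_{U_0}$ is self-adjoint; the bound $(z-s)^Tn\ge\|n\|^2$ follows from the projection inequality exactly as the paper's ``under-brace'' step; your contradiction argument for $P_{U_0}(n)\neq 0$ via $K\subseteq H_z^{+}$ and $K\cap U\neq\varnothing$ is sound and parallels the nonemptiness argument inside \Cref{lemma:goodCRMKcapU}; and the passage to item (ii) by evaluating at nearest points is the paper's own. What your route buys is an explicit formula for $C(z)$ that the paper never writes down, quantifying how much farther CRM travels along the common line than MAP ($\alpha=\|n\|^2/\|P_{U_0}(n)\|^2$ versus $1$); the cost is some linear-algebra bookkeeping (identifying the direction space of $H_z\cap U$ and its orthogonal complement inside $U_0$) that the paper's synthetic argument sidesteps.
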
 
\begin{proof}
Assume $z\in U$ and $s\in K\cap U$ arbitrary but fixed. If $z\in K$ the result follows trivially because then $P_UP_K(z)=z$, $\frac{1}{2}(\Id+R_UR_K)(z)=z$ and $C(z)=z$ due to \Cref{Fix-CT}. So, let $z\in U\backslash K$ and also, in order to avoid translation formulas, let us assume without loss of generality that $U$ is a subspace. Recall that Lemma \ref{lemma:goodCRMKcapU} characterized $C(z)$ as the projection of $z$ onto the intersection of $U$ and the hyperplane with normal $z-P_K(z)$ passing through $P_K(z)$. Therefore, the triangle of vertices $z$, $P_K(z)$ and $C(z)$ has a right angle at $P_K(z)$ and the triangle of vertices $z$, $P_UP_K(z)$ and $P_K(z)$ has a right angle at $P_UP_K(z)$ since $U$ is a subspace. Considering these two triangles and taking into account the fact that hypotenuses are larger than  corresponding legs, we conclude that
\[\label{CRMstep_longer_thanMAP}
\|C(z)-z\|\geq\|P_K(z)-z\|\geq\|P_UP_K(z)-z\|.
\]
% Let us resume the notation for the MAP step, that is, $z_{\MAP}\coloneq P_UP_K(z)$.

Another property we have is that the MAP point $P_UP_K(z)$ is a convex combination of $z$ and $C(z)$. In order to deduce this, first note that these three points are collinear. The collinearity follows because both $P_UP_K(z)$ and $C(z)$ lie in the semi-line starting at $z$ and passing through $P_UR_K(z)=\frac{1}{2}(R_K(z)+R_UR_K(z))$. In fact, this semi-line contains the circumcenter $C(z)$ as it is a bisector of the isosceles triangle of vertices $z$, $R_K(z)$ and $R_UR_K(z)$. Bearing in mind that $P_U$ is a linear operator \cite[Proposition 2.10]{Bauschke:2018wa} and that $z\in U$, we get
\[\begin{aligned}z_{\MAP} & = P_UP_K(z) = P_U\left(\frac{1}{2}(z+R_K(z))\right)= \frac{1}{2}P_U(z+R_K(z)) \\
                       &=\frac{1}{2}\left(P_U(z)+P_UR_K(z)\right)=\frac{1}{2}(z+P_UR_K(z)).
\end{aligned}
\] 
Hence, $z_{\MAP}$ is a convex combination of $z$ and $P_UR_K(z)$ with parameter $\frac{1}{2}$. Therefore, we have more than just collinearity of $z, C(z)$ and  $z_{\MAP}$. Both $ C(z)$ and  $z_{\MAP}$ lie on the semi-line starting at $z$ and passing through $P_UR_K(z)$. In particular, this means that $z$ cannot lie between $C(z)$ and $z_{\MAP}$. Thus, in view of \eqref{CRMstep_longer_thanMAP}, the only remaining possibility is that  
$z_{\MAP}$ is a convex combination of $C(z)$ and $z$, that is, 
there exists a parameter  $r\in[0,1]$  such that 
\[
z_{\MAP}=rC(z)+(1-r)z,
\]
and
\[
z_{\MAP}-C(z)=(1-r)(z-C(z)) \mbox{ and } z-z_{\MAP}=r(z-C(z)).
\]
Moreover, the parameter $r$ is strictly larger than zero because otherwise $z$ would be in $K\cap U$. Hence,
\[\label{CRMconvex_combMAP}
z_{\MAP}-C(z)=\frac{1-r}{r}(z-z_{\MAP}).
\]
This equation will be properly combined with the following inner product manipulations
\begin{align}\label{InnerProdManipulation}
(z-z_{\MAP})^T(C(z)-s) &=(z-P_K(z))^T(C(z)-s)+(P_K(z)-z_{\MAP})^T(C(z)-s)  \notag\\
&=\underbrace{(z-P_K(z))^T(C(z)-P_K(z))}_{=0}+\underbrace{(z-P_K(z))^T(P_K(z)-s)}_{\geq 0} \notag\\
& \phantom{= } + \underbrace{(P_K(z)-z_{\MAP})^T(C(z)-s)}_{=0}  \notag\\
& \label{InnerProdManipulation} \geq 0.
\end{align} 
The first under-brace equality follows as a consequence of the right angle at $P_K(z)$ of the triangle of vertices $z$, $P_K(z)$ and $C(z)$. The under-brace inequality is due to characterization of projections onto closed convex sets as, in particular, $s$ belongs to $K$. The last under-brace remark holds since $P_K(z)-z_{\MAP}$ is orthogonal to the subspace $U$ and both $C(z)$ and $s$ are in $U$.  Now, inequality \eqref{InnerProdManipulation} together with equation \eqref{CRMconvex_combMAP} give us
\[
(z_{\MAP}-C(z))^T(s-C(z))=\frac{1-r}{r}(z-z_{\MAP})^T(s-C(z))\leq 0,
\]
which, due to the cosine rule, leads to $\|C(z)-s\|\leq \|z_{\MAP}-s\|$ for any $s\in K\cap U$. Taking this into account and letting $\hat z,\tilde z\in K\cap U$ realize the distance of $C(z), z_{\MAP}$ to $K\cap U$, respectively, we have 
\[\dist(C(z), K\cap U)=\|C(z)-\hat z\|\leq \|C(z)-\tilde z\|\leq  \|z_{\MAP}-\tilde z\|=\dist(z_{\MAP},K\cap U),
\]
proving the first inequalities in items (i) and (ii).

The rest of the proof is a comparison between $z_{\MAP}$ and $z_{\DRM}$. We will see below that under our hypothesis $z_{\MAP}$ is precisely the midpoint of $P_K(z)$ and $z_{\DRM}$. The linearity of $R_U$ will be employed.
\begin{align}R_UP_K(z) &= R_U\left(\frac{1}{2}(z+R_K(z))\right)= \frac{1}{2}(R_U(z)+R_UR_K(z)) \\
&=\frac{1}{2}(z+R_UR_K(z))=z_{\DRM}.
\end{align} 
So, $z_{\MAP}=\frac{1}{2}(P_K(z)+z_{\DRM})$. Furthermore, $\|P_K(z)-z_{\MAP}\|=\|z_{\DRM}-z_{\MAP}\|$ and, $P_K(z)-z_{\MAP}$ and $z_{\DRM}-z_{\MAP}$ are orthogonal to $U$. In particular, for any $s\in K\cap U$ it holds that $(s-z_{\MAP})^T(P_K(z)-z_{\MAP})=0$ and $(s-z_{\MAP})^T(z_{\DRM}-z_{\MAP})=0$. Then, we conclude that the right triangle of vertices $s$, $z_{\MAP}$ and $P_K(z)$ is congruent to the one of vertices $s$, $z_{\MAP}$ and $z_{\DRM}$. This implies that $\|z_{\DRM}-s\|=\|P_K(z)-s\|$. However, $\|P_K(z)-s\|\geq \|z_{\MAP}-s\|$ as $P_K(z)-s$ is a hypotenuse vector with one corresponding leg vector being $z_{\MAP}-s$. Finally, having stated the inequality $\|P_K(z)-s\|\geq \|z_{\MAP}-s\|$ for any $s\in K\cap U$ allows us to take $\check z,\tilde z\in K\cap U$ realizing the distance of $z_{\DRM}, z_{\MAP}$ to $K\cap U$, respectively, and thus we have 
\[\dist(z_{\MAP}, K\cap U)=\|z_{\MAP}-\tilde z\|\leq \|z_{\MAP}-\check z\|\leq  \|z_{\DRM}-\check z\|=\dist(z_{\DRM},K\cap U).\] \qed

 %We then conclude the proportionality of three proper right triangles, described ahead with their vertices in the correct order of correspondence. First: the triangle of vertices $z$, $P_K(z)$ (vertice with the right angle) and $C(z)$; Second: the triangle of vertices $z$, $P_UP_K(z)$ (vertice with the right angle) and $P_K(z)$; Third: the triangle of vertices $P_K(z)$, $P_UP_K(z)$ (vertice with the right angle) and $C(z)$. From elementary geometry, the correspondence of these triangles together with Pythagoras gives us
%\[
%C(z)=z+\frac{\|z-P_K(z)\|^2}{(z-P_UP_K(z))^T(z-P_K(z))}(P_UP_K(z)-z)
%\]
%with
%\[
%\frac{\|z-P_K(z)\|^2}{(z-P_UP_K(z))^T(z-P_K(z))}>1.
%\]
%Consider now the inner product calculation
%\begin{align}(x1-x3)^T(x4-x) &=(x1-x2)^T(x4-x)+(x2-x3)^T(x4-x)\\
%&=\underbrace{(x1-x2)^T(x4-x2)}_{=0}+\underbrace{(x1-x2)^T(x2-x)}_{\g%eq 0}+ \underbrace{(x2-x3)^T(x4-x)}_{=0}\\
%&\geq 0
%\end{align} 
%%Thus, from elementary geometry we obtain
%Of course, we have that
%\[
%\|P_UP_K(z)-s\|^2=\|P_UP_K(z)-C(z)\|^2+\|C(z)-s\|^2+2(C(z)-s)^T(P_UP_K(z)-C(z)),
%\]
\end{proof}

The previous theorem is particularly meaningful when seen as a comparison between CRM and MAP as the associated sequences to these methods stay in the affine subspace $U$ whenever the initial point is taken there. This is not the case for DRM sequences. Anyhow, our numerical section confirms a strict favorability of CRM over both MAP and DRM.

\section{CRM for general convex intersection}
\label{sec:prod}
We are now going to use CRM for solving
\[\label{GeneralConvexProblem}
\text{ Find } x^{*}\in X\coloneqq  \bigcap_{i=1}^m X_i, 
\]
where $X\subset \RR^n$ is a nonempty set given by the intersection of finitely many closed convex sets $X_1, X_2, \ldots, X_m$.

In order to employ CRM let us rewrite Problem \eqref{GeneralConvexProblem} by considering Pierra's reformulation \cite{Pierra:1984hl}. Define $W\coloneqq X_1\times X_2 \times \cdots \times X_m$ as the product space of the sets and $D\coloneqq \{(x,x,\ldots,x)\in  \RR^{nm} \mid  x\in\RR^n\}$. Then, one can easily see that
\[\label{ProductSpacePierra}
 x^{*}\in X \Leftrightarrow ( x^{*}, x^{*},\ldots, x^{*})\in W\cap D.
\]
Due to \eqref{ProductSpacePierra}, solving Problem \eqref{GeneralConvexProblem} corresponds to solving
\[\label{ProductSpaceReformulation}
\text{ Find } z^{*}\in W\cap D. 
\]
Note that it is straightforward to prove that $D$ is a subspace of $\RR^{nm}$. Moreover, considering $m$ arbitrary vectors $x^{(i)}$ in $\RR^n$, with $i=1,\ldots,m$,  we can build an arbitrary point in $\RR^{nm}$ of the form $z=(x^{(1)},x^{(2)}, \, \ldots, x^{(m)})\in\RR^{nm}$ and its projection onto $D$ is given by  
\[\label{eq:projDiagonalSpace}
P_D(z)=\frac{1}{m}\left(\sum_{i=1}^m x^{(i)},\sum_{i=1}^m x^{(i)},\ldots ,\sum_{i=1}^m x^{(i)}\right).\]
As for the orthogonal projection of $z=(x^{(1)},x^{(2)}, \, \ldots, x^{(m)})\in\RR^{nm}$ onto $W$, we have 
\[\label{eq:projProductSpace}
P_W(z)=\left(P_{X_1}(x^{(1)}),P_{X_2}(x^{(2)}),\, \ldots,P_{X_m}(x^{(m)})\right).\]

Next, relying on the results of Section 2, we establish convergence of CRM to a solution of Problem \eqref{GeneralConvexProblem}.

\begin{theorem}[convergence of CRM for general convex intersection]\label{TheoGenConvInt}
Assume $W,D\subset \RR^{n}$ as above and let $x^0\in \RR^n$ be given. Then, taking as initial point $z^0\coloneq (x^0,x^0, \, \ldots, x^0)\in D$, we have that the sequence $\{z^k\}\subset\RR^{nm}$ generated by $z^{k+1}\coloneq \circum\{z^{k},R_{W}(z^{k}),R_{D}R_{W}(z^{k})\}$ is well defined, entirely contained in $D$ and converges to a point $ z^{*}=( x^{*}, x^{*}, \, \ldots,  x^{*})\in\RR^{nm}$, where $ x^{*}\in X$.
\end{theorem}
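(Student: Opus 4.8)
The plan is to recognize that Problem~\eqref{ProductSpaceReformulation} is nothing but an instance of the convex-set/affine-subspace problem~\eqref{eq:KcapUProblem} treated in \Cref{sec:convex-affine}, with the roles of $K$ and $U$ played by $W$ and $D$ respectively, and then to invoke \Cref{TeoremaSpan} essentially verbatim. Concretely, I would first check that the hypotheses of \Cref{lemma:goodCRMKcapU} (and hence of \Cref{TeoremaSpan}) are met in the product space $\RR^{nm}$: the set $W=X_1\times\cdots\times X_m$ is closed and convex because each $X_i$ is, and $D$ is a subspace, hence an affine subspace, as already observed. The nonemptiness of the intersection $W\cap D$ follows from the standing assumption $X\neq\varnothing$ together with the equivalence~\eqref{ProductSpacePierra}: any $x^{*}\in X$ yields $(x^{*},\ldots,x^{*})\in W\cap D$.

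Next I would note that the iteration defining $(z^k)_{k\in\NN}$, namely $z^{k+1}=\circum\{z^{k},R_{W}(z^{k}),R_{D}R_{W}(z^{k})\}$, is precisely the scheme~\eqref{eq:CRMKcapUiter} with $K=W$ and $U=D$. In particular the reflection is taken \emph{first} onto the convex set $W$ and \emph{then} onto the subspace $D$, matching the order required in \Cref{sec:convex-affine}. Since the prescribed initial point $z^{0}=(x^{0},\ldots,x^{0})$ lies in $D$, \Cref{TeoremaSpan} applies directly and delivers three conclusions at once: the sequence $(z^k)_{k\in\NN}$ is well-defined, it remains in $D$ for all $k$, and it converges to some $z^{*}\in W\cap D$.

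Finally, I would translate this limit back to the original space. Because $z^{*}\in D$, it has the diagonal form $z^{*}=(x^{*},\ldots,x^{*})$ for some $x^{*}\in\RR^n$; because in addition $z^{*}\in W$, the equivalence~\eqref{ProductSpacePierra} gives $x^{*}\in X$, which is the sought point in the intersection. This completes the argument for Problem~\eqref{GeneralConvexProblem}.

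The notable feature of this proof is that there is essentially no obstacle left to overcome: all the analytic work — well-definedness of the circumcenter operator, the invariance $z^k\in D$, the firmly quasinonexpansiveness estimate, and the Fej\'er-type convergence — has already been carried out in the abstract convex/affine setting of \Cref{TeoremaSpan}. The only points demanding (routine) care are the product-space verifications, i.e.\ that $W$ inherits closedness and convexity from the factors $X_i$ and that $W\cap D\neq\varnothing$; the entire difficulty of handling $m\ge 2$ sets has been absorbed into Pierra's reformulation and the single-set-plus-subspace theorem, so the present statement reads as a corollary rather than a genuinely new convergence proof.
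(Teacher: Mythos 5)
Your proposal is correct and follows exactly the paper's own route: the paper proves \Cref{TheoGenConvInt} in one line by invoking \Cref{TeoremaSpan} with $W$, $D$ and $nm$ in the roles of $K$, $U$ and $n$. Your additional verifications (closedness and convexity of $W$, nonemptiness of $W\cap D$ via \eqref{ProductSpacePierra}, and the translation of the diagonal limit back to $x^{*}\in X$) are the routine details the paper leaves implicit, so nothing is missing or different in substance.
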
 
\begin{proof} The result follows by enforcing \Cref{TeoremaSpan} with $W$, $D$ and $nm$ playing the role of $K$, $U$ and $n$, respectively.\qed\end{proof}

\section{Numerical experiments}\label{sec:NI}

This section illustrates several numerical comparisons between CRM, MAP and DRM. 
Computational tests were performed on an Intel Xeon W-2133 3.60GHz with 32GB of RAM running Ubuntu 18.04 and using \texttt{Julia} programming language.

In order to present the results of our numerical experiments, we choose the \emph{number of iterations} as performance measure. The rationale of choosing such measure is that in each of the methods, the majority of the computational cost involved in one iteration is equivalent: the same number of orthogonal projections. Moreover, when using the product space reformulation, one can parallelize the computation of projections to speedup the CPU time, and iterations are still equivalent.

\subsection{Intersection between an affine and a convex set}

First, we show the results of the aforementioned methods when applied to solve the problem of finding a point in the intersection of an affine subspace  and a closed convex set. 

In these experiments we want to find $x^{*}\in \RR^{n}$ that solves the conic system  
\[
\label{eq:ConicSystem}
\begin{aligned}
 Ax = b, \\
 x  \in \SOC_{n},
\end{aligned}
\]
where  $A\in\RR^{m\times n}$ is a matrix with  $m<n$, $b\in\RR^{m}$ and $\SOC_{n}$ is the standard \emph{second-order cone of dimension $n$} defined as
\[
\SOC_{n} \coloneqq  \{(t,u)\in \RR^{n}\mid u\in\RR^{n-1}, t\in \RR, \|u\|\leq t \}.
\]
$\SOC_{n}$ is also called the ice-cream cone or the Lorentz cone and it is easy to verify that it is a closed convex set. This problem, called \emph{second-order conic system feasibility},  arises in  second-order cone programming (SOCP)~\cite{Alizadeh:2003,Lobo:1998hq}, where a linear function is minimized over the intersection of an affine set and the intersection of second-order cones and an initial feasible point needs to be found~\cite{Cucker:2015gf}. Of course, $x^{*}$ is a solution of \eqref{eq:ConicSystem} if and only if $x^{*}$ lies in  $\pazocal{S} \coloneqq   \SOC_{n}\cap U_{A,b}$, where $U_{A,b} \coloneqq  \{x\in \RR^{n}\mid Ax =b\}$ is the affine subspace defined by $A$ and $b$, that is, if nonempty, the solution set of  \eqref{eq:ConicSystem}  can be written as  \eqref{eq:KcapUProblem}.

To execute our tests, we randomly generate \num{100} instances of subspaces $U_{A,b}$, where $n$ is fixed as \num{200} and $m$ is a random value between \num{1} and $n-1$. We guarantee that $\pazocal{S}$ is nonempty by  sampling $A$  from the standard normal distribution  and choosing a suitable $b$. Each instance is run for \num{10} initial random points, summing up to \num{1000} individual tests. Each initial point $z^{0}$ is also sampled from the standard normal distribution and is accepted as long as it is not in $\pazocal{S}$. We also assure that the norm of $z^{0}$ is between \num{5} and \num{15} and then we project it over $U_{A,b}$ to begin each method. Thus,  in view of \Cref{TeoremaSpan}, we are sure that CRM solves problem  \eqref{eq:ConicSystem}.

Let $\{z^k\}$ be any  of the three sequences that we monitor: $\{z_\CRM^k\}$, $\{z_\DRM^k\}$ and $\{z_\MAP^k\}$, that is, CRM, DRM and MAP sequences, respectively. We considered as tolerance $\varepsilon \coloneqq  10^{-6}$
and employed as  stopping criteria  the \emph{gap distance}, given by
\[
\|P_{U_{A,b}}(z^k) - P_{\SOC_n}(z^k)\|< \varepsilon,
\]
which we consider a reasonable measure of infeasibility. Note that for CRM and MAP, the sequence monitored lies in $U_{A,b}$, however that is not the case of DRM. So, for the sake of fairness, we utilized the aforementioned stopping criteria for all methods. 
The projections computed to measure the gap distance can be utilized on the next iteration, thus this calculation does not add any extra cost.

The numerical experiments results shown in~\Cref{fig:pp-affineSOC,tab:affineSOC}  corroborate with \Cref{ComparisonCRM_MAP_DRM}, since  CRM has a much better performance than DRM and MAP. \Cref{fig:pp-affineSOC} is a performance profile~\cite{Dolan:2002du}, an analytic tool that allows one to compare several different algorithms on a set of problems with respect to a performance measure or cost.  The vertical axis indicates the percentage of problems solved, while the horizontal axis indicates, in log-scale, the corresponding factor of the number of iterations used by the best solver. It shows that CRM was always better or equal than the other two methods in comparison.

\begin{figure}[!ht]

    \centering
    \includegraphics[width=.7\textwidth]{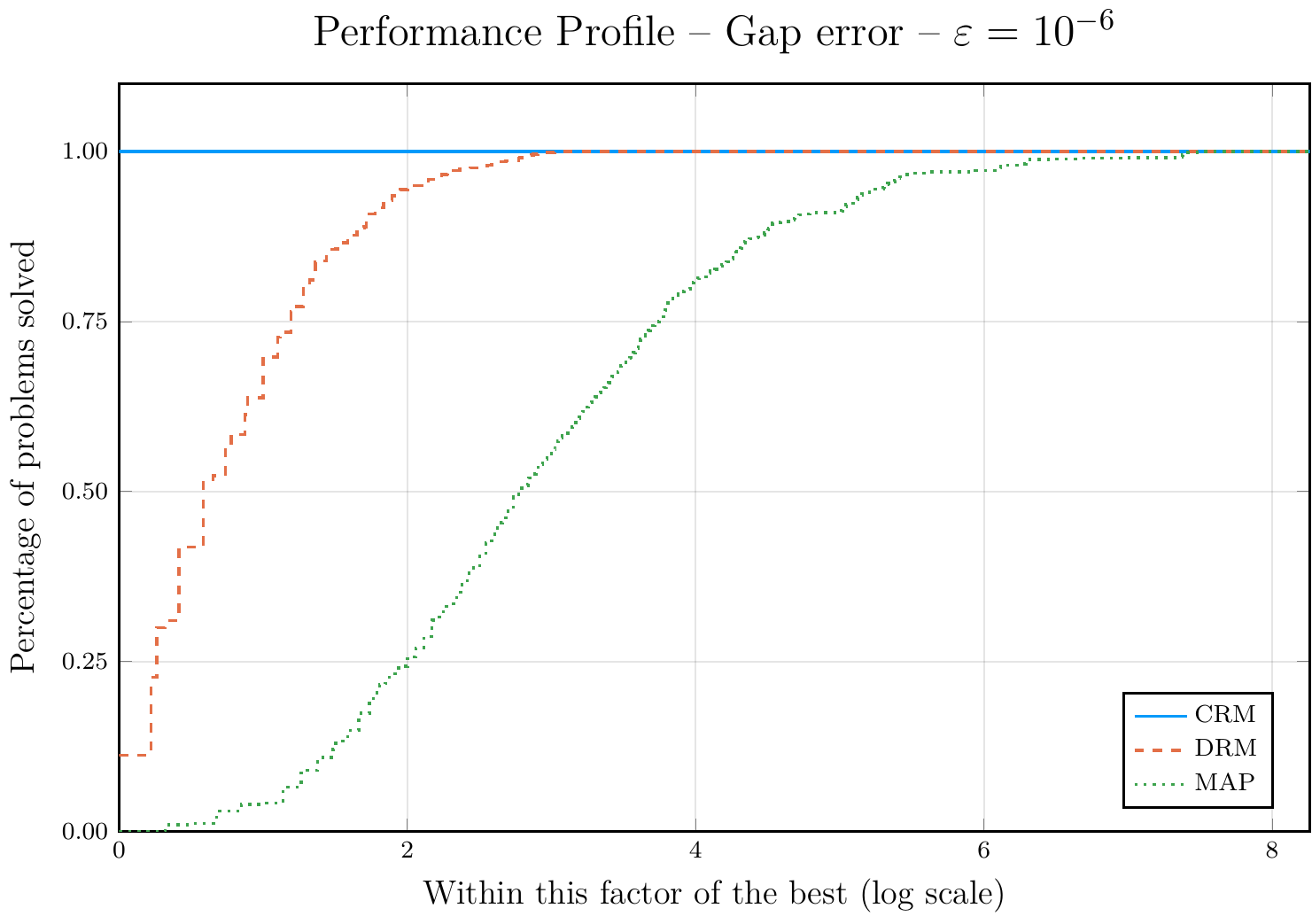}

\caption{Experiments with affine subspaces and the second order cone.\label{fig:pp-affineSOC}}
\end{figure}

\begin{table}[!ht]
\caption{\label{tab:affineSOC} Statistics of the experiments with affine subspaces and the second order cone (in number of iterations).}
\centering 
\begin{tabular}{lcccc}
\toprule
  &    \textbf{mean}  &   \textbf{min} &   \textbf{median} &   \textbf{max} \\
\cmidrule{2-5}
   \textbf{CRM} &    4.727 &   3 &   5.0  &     6    \\
   \textbf{DRM} &   11.602 &   4 &   8.0  &     83  \\
   \textbf{MAP} &   83.981 &   4 &   32.0 &     1063    \\
\bottomrule
\end{tabular}
\end{table}

In \Cref{tab:affineSOC}, each column shows the \emph{mean}, \emph{minimum}, \emph{median} and \emph{maximum}, respectively, of iterations taken by each method for all instances and starting points. We remark that CRM took at most 6 iterations, but in average, less than 5. Moreover, we report that there were 89 (out of 1000) ties between CRM and DRM, while CRM always took less iterations than MAP. There were 10 ties between MAP and DRM.

% Example consider $D=\{(x_1,x_2)\in\RR^2 : x_2=0\}$ and $X=\{(x_1,x_2)\in\RR^2 : x_1^2\le  x_2\}$. Here MAP performs poorly with no linear convergence for any point in $\RR^2$. We should able to proof that CRM converges linearly starting anywhere 

\subsection{Experiments with product space reformulation}

We show now experiments with Pierra's product space reformulation of CRM stated in \Cref{sec:prod}, which we call CRM-prod. Recall that we defined the Cartesian product of of the convex sets $X_{1},\ldots X_{m}$ as $W$  and defined  $D$ as  the \emph{diagonal} subspace $\{(x,x,\ldots,x)\in  \RR^{nm} \mid  x\in\RR^n\}$.

 Both MAP and DRM can also be considered in product  space reformulation versions. For MAP, the iteration is given by $z^{k+1}_{\MAP}\coloneqq P_{W}P_{D}(z_{\MAP}^{k})$. For DRM, the iteration is given by 
\[
z^{k+1}_{\DRM} \coloneqq \frac{1}{2}z^{k}_{\DRM}  + \frac{1}{2}R_{W}R_{D}(z^{k}_{\DRM})
\]
where we use the projections onto $D$ and $W$ given by \eqref{eq:projDiagonalSpace} and \eqref{eq:projProductSpace} to define the reflectors $R_{D}$ and $R_{W}$, respectively.

We compare the numerical experiments of CRM-prod with MAP-prod and DRM-prod when applied to the problem of \emph{polyhedral feasibility}. Here, each closed convex set is a half-space given by 
\[
X_{i} \coloneqq \{x\in \RR^{n} \mid a_{i}^{T}x \leq b_{i} \}, \text{ for } i=1,\ldots m,
\]
where $a_{i}\in \RR^{n}$ and $b_{i} \in \RR$. $X = \bigcap_{i=1}^{m}X_{i}$ is called\emph{ convex polyhedron} (or polytope).  

To generate the instance, we fix $n=\num{200}$ and sampled each $a_{i}$ from the standard normal distribution, for $i=1,\ldots,m$, where $m$ is randomly selected from $1$ to $n-1$. To assure that $X\neq \varnothing$, we sample from the standard normal distribution an $\bar x$ and fix $\bar b_{i} \coloneqq a_{i}^{T}\bar x$. We then randomly select $p$ indices from $1$ to $m$ to determine the set $\pazocal{I} = \{i_1,\ldots, i_p\}$ and we define
\[
b_{i} = \begin{dcases*}\bar b_{i},&  if $ i\notin\pazocal{I}$\\
                        \bar b_{i} + \|\bar b \|\cdot r,&  if $i\in \pazocal{I}$
\end{dcases*},
\]
where $\bar b = (\bar b_{1},\ldots,\bar b_{m})$ and $r$ is a random value between  \num{0} and \num{1}. Thus, for all $i=1,\ldots,m$, $a_{i}^{T}\bar x \leq b_{i}$ and moreover, for $\hat \imath \in \pazocal{I}$, $a_{\hat \imath}^{T}\bar x < b_{i}$, that is, $X$ has a Slater point.

Again, for each sequence  $\{z^k\}$ that we generate  we use as stopping criteria the error given by the gap distance
\[
\|z^k - P_{W}(z^k)\|< \varepsilon,
\]
with tolerance $\varepsilon \coloneqq  10^{-6}$.  In \Cref{fig:HalfspacesCRM-prod}, we plot number of iterations (horizontal axis) versus the error (vertical axis), both with $\log_{10}$ scale that each method realized to achieve the stopping criteria, for one instance. One can see that CRM-prod overpowers DRM-prod and MAP-prod.

We also restart the methods with 20 different starting points sampled from the standard normal distribution guaranteeing that the norm of each one is  between \num{5} and \num{15} and then we project it over $D$ to begin each method. In \Cref{tab:HalfspacesCRM-prod}, we present  again the \emph{mean}, \emph{minimum}, \emph{median} and \emph{maximum}, respectively, for these experiments.

\begin{figure}[!ht]

    \centering
    \includegraphics[width=.7\textwidth]{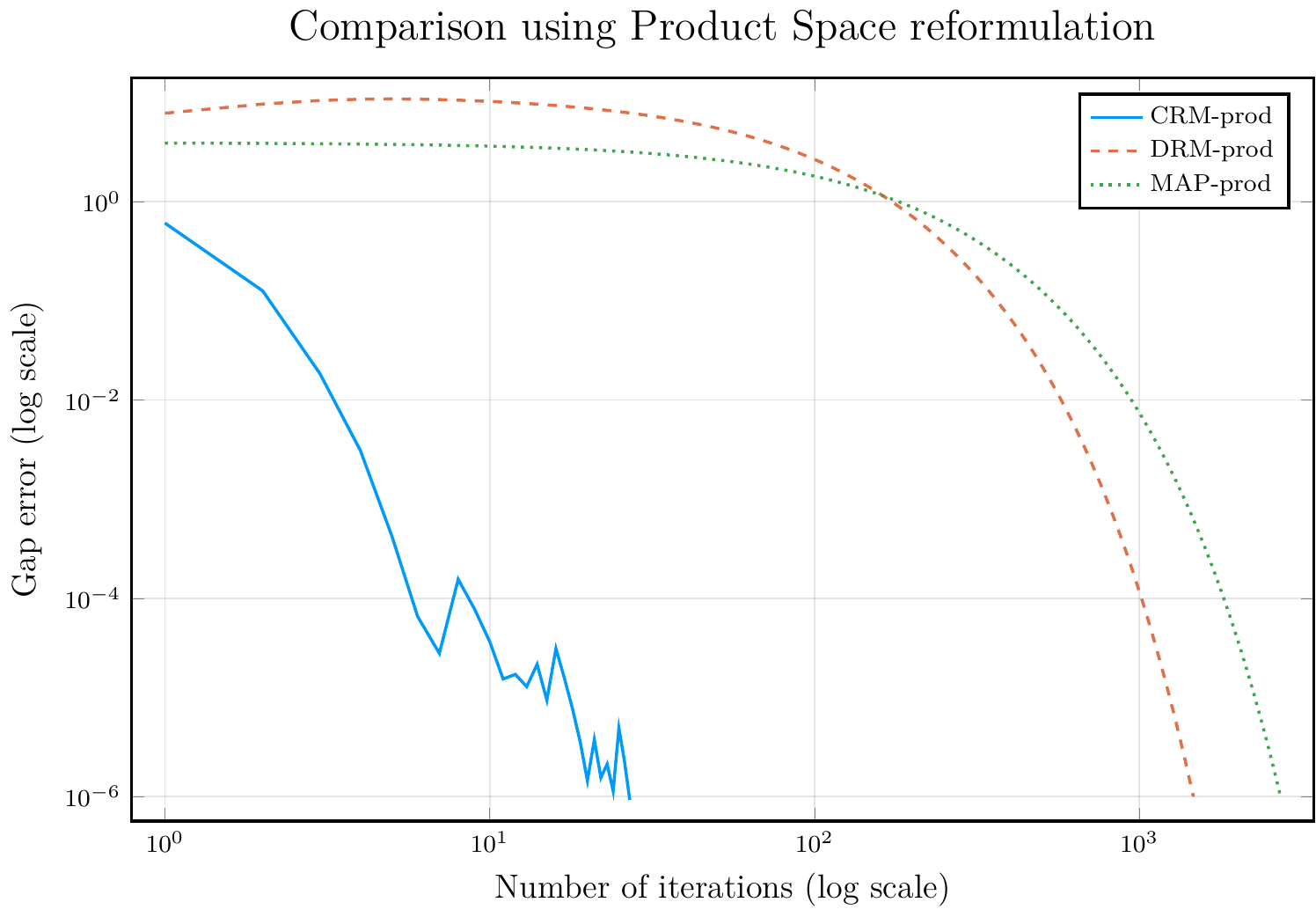}

\caption{Polyhedral feasibility using the product space reformulation.\label{fig:HalfspacesCRM-prod}}
\end{figure}

\begin{table}[!ht]
\caption{\label{tab:HalfspacesCRM-prod} Statistics of the Intersection of half-spaces using the product space reformulation (in number of iterations).}
\centering 
\begin{tabular}{lcccc}
\toprule
  &    \textbf{mean}  &   \textbf{min} &   \textbf{median} &   \textbf{max} \\
\cmidrule{2-5}
   \textbf{CRM} & 41.5     & 19.0    & 38.0    & 89.0     \\
   \textbf{DRM} & 1441.15  & 1036.0  & 1470.5  & 1586.0      \\
   \textbf{MAP} & 2768.3   & 2534.0  & 2787.0  & 2952.0   \\

\bottomrule
\end{tabular}
\end{table}

%....  {\bf TO FILL CONCLUSION OF SECTION}. .... 

\section{Concluding remarks}\label{sec:concluding}

This work has taken a large step towards consolidating circumcenter schemes as powerful theoretical and practical tools for addressing feasibility problems. We were able to prove that the circumcentered-reflection method, referred to as CRM throughout the article, finds a point in the intersection of a finite number of closed convex sets. Also, favorable theoretical properties of circumcenters over alternating projections and Douglas-Rachford iterations were proven. Along with the theory, we have seen great numerical performance of CRM in comparison to the classical variants for polyhedral feasibility and inclusion problems involving second order cones. Our aim now is to widen the scope of experiments as well as to broaden the investigation on the use of circumcenters for non-convex problems, including sparse affine feasibility problems and optimization involving manifolds. Finally, due to favorable computational tests, we would like to carry out a local rate convergence analysis for CRM under conditions such as metric-subregularity and H\"older type error bounds.

%OBS: Falar sobre: Error bound and local rate convergence analysis, use of approximate projections, further numerical experiments with real world applications, investigation on CRM for non-convex problems, such as sparse affine feasibility problems, phase one of the simplex method, etc...

%OBS: Incluir nova referencia de Bauschke sobre CRM e isometrias (submetido na NUMA) OK. 

%OBS: dar referencia exata a firme nao expansividade de MAP.

% \begin{acknowledgements}
% We thank the anonymous referees for their valuable suggestions which  significantly improved  this manuscript. 
% \end{acknowledgements}

\bibliographystyle{spmpsci}

\bibliography{refs}

\begin{thebibliography}{10}
\providecommand{\url}[1]{{#1}}
\providecommand{\urlprefix}{URL }
\expandafter\ifx\csname urlstyle\endcsname\relax
  \providecommand{\doi}[1]{DOI~\discretionary{}{}{}#1}\else
  \providecommand{\doi}{DOI~\discretionary{}{}{}\begingroup
  \urlstyle{rm}\Url}\fi

\bibitem{Alizadeh:2003}
Alizadeh, F., Goldfarb, D.: Second-order cone programming.
\newblock Math. Program., Ser B \textbf{95}(1), 3--51 (2003).
\newblock \doi{10.1007/s10107-002-0339-5}

\bibitem{AragonArtacho:2013}
Arag{\'o}n~Artacho, F.J., Borwein, J.M.: Global convergence of a non-convex
  {{Douglas}}\textendash{{Rachford}} iteration.
\newblock J. Glob. Optim. \textbf{57}(3), 753--769 (2013).
\newblock \doi{10.1007/s10898-012-9958-4}

\bibitem{AragonArtacho:2019ug}
Arag{\'o}n~Artacho, F.J., Campoy, R., Tam, M.K.: The
  {{Douglas}}\textendash{{Rachford}} algorithm for convex and nonconvex
  feasibility problems.
\newblock Math Meth Oper Res  (2019).
\newblock \doi{10.1007/s00186-019-00691-9}

\bibitem{BCNPW14}
Bauschke, H.H., {Bello-Cruz}, J.Y., Nghia, T.T.A., Phan, H.M., Wang, X.: The
  rate of linear convergence of the {{Douglas}}\textendash{{Rachford}}
  algorithm for subspaces is the cosine of the {{Friedrichs}} angle.
\newblock J. Approx. Theory \textbf{185}, 63--79 (2014).
\newblock \doi{10.1016/j.jat.2014.06.002}

\bibitem{BCNPW15}
Bauschke, H.H., {Bello-Cruz}, J.Y., Nghia, T.T.A., Phan, H.M., Wang, X.:
  Optimal {{Rates}} of {{Linear Convergence}} of {{Relaxed Alternating
  Projections}} and {{Generalized Douglas}}-{{Rachford Methods}} for {{Two
  Subspaces}}.
\newblock Numer. Algorithms \textbf{73}(1), 33--76 (2016).
\newblock \doi{10.1007/s11075-015-0085-4}

\bibitem{Bauschke:2006ej}
Bauschke, H.H., Borwein, J.M.: On {{Projection Algorithms}} for {{Solving
  Convex Feasibility Problems}}.
\newblock SIAM Rev. \textbf{38}(3), 367--426 (2006).
\newblock \doi{10.1137/S0036144593251710}

\bibitem{BC2011}
Bauschke, H.H., Combettes, P.L.: Convex {{Analysis}} and {{Monotone Operator
  Theory}} in {{Hilbert Spaces}}, 2 edn.
\newblock {{CMS}} Books in Mathematics. {Springer International Publishing},
  {Cham} (2017).
\newblock \doi{10.1007/978-3-319-48311-5}

\bibitem{Bauschke:2016}
Bauschke, H.H., Dao, M.N., Noll, D., Phan, H.M.: Proximal point algorithm,
  {{Douglas}}-{{Rachford}} algorithm and alternating projections: A case study.
\newblock J. Convex Anal. \textbf{23}(1), 237--261 (2016)

\bibitem{Bauschke:2016jw}
Bauschke, H.H., Moursi, W.M.: The {{Douglas}}\textendash{{Rachford Algorithm}}
  for {{Two}} ({{Not Necessarily Intersecting}}) {{Affine Subspaces}}.
\newblock SIAM J. Optim. \textbf{26}(2), 968--985 (2016).
\newblock \doi{10.1137/15M1016989}

\bibitem{Bauschke:2016bt}
Bauschke, H.H., Moursi, W.M.: On the {{Douglas}}\textendash{{Rachford}}
  algorithm.
\newblock Math. Program. \textbf{164}(1), 263--284 (2017).
\newblock \doi{10.1007/s10107-016-1086-3}

\bibitem{Bauschke:2015df}
Bauschke, H.H., Noll, D., Phan, H.M.: Linear and strong convergence of
  algorithms involving averaged nonexpansive operators.
\newblock J. Math. Anal. Appl. \textbf{421}(1), 1--20 (2015).
\newblock \doi{10.1016/j.jmaa.2014.06.075}

\bibitem{Bauschke:2018ut}
Bauschke, H.H., Ouyang, H., Wang, X.: On circumcenters of finite sets in
  {{Hilbert}} spaces.
\newblock Linear Nonlinear Anal. \textbf{4}(2), 271--295 (2018)

\bibitem{Bauschke:2019uh}
Bauschke, H.H., Ouyang, H., Wang, X.: Circumcentered methods induced by
  isometries.
\newblock arXiv.org (1908.11576) (2019)

\bibitem{Bauschke:2018wa}
Bauschke, H.H., Ouyang, H., Wang, X.: On circumcenter mappings induced by
  nonexpansive operators.
\newblock arXiv.org (1811.11420) (2019)

\bibitem{Bauschke:2019}
Bauschke, H.H., Ouyang, H., Wang, X.: On the linear convergence of
  circumcentered isometry methods.
\newblock arXiv.org (1912.01063) (2019)

\bibitem{Behling:2017da}
Behling, R., {Bello-Cruz}, J.Y., Santos, L.R.: Circumcentering the
  {{Douglas}}\textendash{{Rachford}} method.
\newblock Numer. Algorithms \textbf{78}(3), 759--776 (2018).
\newblock \doi{10.1007/s11075-017-0399-5}

\bibitem{Behling:2017bz}
Behling, R., {Bello-Cruz}, J.Y., Santos, L.R.: On the linear convergence of the
  circumcentered-reflection method.
\newblock Oper. Res. Lett. \textbf{46}(2), 159--162 (2018).
\newblock \doi{10.1016/j.orl.2017.11.018}

\bibitem{Behling:2019dj}
Behling, R., {Bello-Cruz}, J.Y., Santos, L.R.: The block-wise
  circumcentered\textendash{}reflection method.
\newblock Comput. Optim. Appl. p. (online) (2019).
\newblock \doi{10.1007/s10589-019-00155-0}

\bibitem{Benoist:2015}
Benoist, J.: The {{Douglas}}\textendash{{Rachford}} algorithm for the case of
  the sphere and the line.
\newblock J. Glob. Optim. \textbf{63}(2), 363--380 (2015).
\newblock \doi{10.1007/s10898-015-0296-1}

\bibitem{Borwein:2011}
Borwein, J.M., Sims, B.: The {{Douglas}}\textendash{{Rachford Algorithm}} in
  the {{Absence}} of {{Convexity}}.
\newblock In: H.H. Bauschke, R.S. Burachik, P.L. Combettes, V.~Elser, D.R.
  Luke, H.~Wolkowicz (eds.) Fixed-{{Point Algorithms}} for {{Inverse Problems}}
  in {{Science}} and {{Engineering}}, vol.~49, pp. 93--109. {Springer New
  York}, {New York, NY} (2011).
\newblock \doi{10.1007/978-1-4419-9569-8_6}.
\newblock Series Title: Springer Optimization and Its Applications

\bibitem{Borwein:2014}
Borwein, J.M., Tam, M.K.: A {{Cyclic Douglas}}\textendash{{Rachford Iteration
  Scheme}}.
\newblock J. Optim. Theory Appl. \textbf{160}(1), 1--29 (2014).
\newblock \doi{10.1007/s10957-013-0381-x}

\bibitem{Borwein:2015vm}
Borwein, J.M., Tam, M.K.: The cyclic {{Douglas}}-{{Rachford}} method for
  inconsistent feasibility problems.
\newblock J. Nonlinear Convex Anal. Int. J. \textbf{16}(4), 573--584 (2015)

\bibitem{Cucker:2015gf}
Cucker, F., Pe{\~n}a, J., Roshchina, V.: Solving second-order conic systems
  with variable precision.
\newblock Math. Program. \textbf{150}(2), 217--250 (2015).
\newblock \doi{10.1007/s10107-014-0767-z}

\bibitem{Dizon:2019vq}
Dizon, N., Hogan, J., Lindstrom, S.B.: Circumcentering {{Reflection Methods}}
  for {{Nonconvex Feasibility Problems}}.
\newblock arXiv.org (1910.04384) (2019)

\bibitem{Dolan:2002du}
Dolan, E.D., Mor{\'e}, J.J.: Benchmarking optimization software with
  performance profiles.
\newblock Math. Program. \textbf{91}(2), 201--213 (2002).
\newblock \doi{10.1007/s101070100263}

\bibitem{Douglas:1956kk}
Douglas, J., Rachford~Jr., H.H.: On the numerical solution of heat conduction
  problems in two and three space variables.
\newblock Trans. Am. Math. Soc. \textbf{82}(2), 421--421 (1956).
\newblock \doi{10.1090/S0002-9947-1956-0084194-4}

\bibitem{Euclid:1956vn}
{Euclid}, Heath, T.L.: The Thirteen Books of {{Euclid}}'s Elements., vol.~II, 2
  edn.
\newblock {Dover Publications, Inc.}, {New York} (1956)

\bibitem{Lindstrom:2018uc}
Lindstrom, S.B., Sims, B.: Survey: {{Sixty Years}} of
  {{Douglas}}\textendash{{Rachford}}.
\newblock arXiv.org (1809.07181) (2018)

\bibitem{Lobo:1998hq}
Lobo, M.S., Vandenberghe, L., Boyd, S., Lebret, H.: Applications of
  second-order cone programming.
\newblock Linear Algebra Its Appl. \textbf{284}(1-3), 193--228 (1998).
\newblock \doi{10.1016/S0024-3795(98)10032-0}

\bibitem{Ouyang:2018gu}
Ouyang, H.: Circumcenter operators in {{Hilbert}} spaces.
\newblock Master {{Thesis}}, University of British Columbia, {Okanagan, CA}
  (2018).
\newblock \doi{10.14288/1.0371095}

\bibitem{Phan:2016hl}
Phan, H.M.: Linear convergence of the {{Douglas}}\textendash{{Rachford}} method
  for two closed sets.
\newblock Optimization \textbf{65}(2), 369--385 (2016).
\newblock \doi{10.1080/02331934.2015.1051532}

\bibitem{Pierra:1984hl}
Pierra, G.: Decomposition through formalization in a product space.
\newblock Math. Program. \textbf{28}(1), 96--115 (1984).
\newblock \doi{10.1007/BF02612715}

\bibitem{Svaiter:2011fb}
Svaiter, B.F.: On {{Weak Convergence}} of the {{Douglas}}\textendash{{Rachford
  Method}}.
\newblock SIAM J. Control Optim. \textbf{49}(1), 280--287 (2011).
\newblock \doi{10.1137/100788100}

\end{thebibliography}

\end{document}